\theoremstyle{definition}
\newtheorem*{thm*}{Theorem}
\newtheorem{theorem}{Theorem}[section]
\newtheorem{lemma}[theorem]{Lemma}
\newtheorem{definition}[theorem]{Definition}
\newtheorem{prop}[theorem]{Proposition}
\newtheorem{cor}[theorem]{Corollary}
\newtheorem{rem}[theorem]{Remark}
\numberwithin{equation}{section}
\newcommand{\coloneqq}{\mathrel{\mathop:}=}
\newcommand{\dx}{\,\mathrm{d}}
\newcommand{\N}{\mathds{N}}
\newcommand{\eps}{\varepsilon}
\DeclareMathOperator{\fix}{Fix}
\newcommand{\applied}[2]{\langle #1,#2\rangle}
\DeclareMathOperator{\lin}{span}
\DeclareMathOperator{\Real}{Re}
\begin{document}
\title{A new proof of Doob's theorem}
\author{Moritz Gerlach}
\address{Moritz Gerlach\\University of Ulm\\Institute of Applied Analysis\\89069 Ulm\\Germany}
\email{moritz.gerlach@uni-ulm.de}
\author{Robin Nittka}
\address{Robin Nittka\\University of Ulm\\Institute of Applied Analysis\\89069 Ulm\\Germany}
\email{robin.nittka@uni-ulm.de}
\subjclass[2010]{Primary 37A10; Secondary 47G10, 47D07}
\keywords{Markovian semigroups, strong Feller, asymptotics, integral operator}
\date{}
\begin{abstract}
We prove that every bounded, positive, irreducible, stochastically
continuous semigroup on the space of bounded, measurable functions
which is strong Feller, consists of kernel operators and possesses an
invariant measure converges pointwise. This differs from Doob's
theorem in that we do not require the semigroup to be Markovian and
request a fairly weak kind of irreducibility. In addition, we
elaborate on the various notions of kernel operators in this context,
show the stronger result that the adjoint semigroup converges strongly
and discuss as an example
diffusion equations on rough domains. The
proofs are based on the theory of positive semigroups and do not use
probability theory.
\end{abstract}
\maketitle

\section{Introduction}

One of Doob's celebrated theorems states that, given an irreducible, strong
Feller, stochastically continuous, Markovian transition semigroup $(T(t))_{t
\ge 0}$ on the space $B_b(\Omega)$ of bounded, measurable functions, where
$\Omega$ is a Polish space, with invariant probability measure $\mu$, the
transition probabilities converge to $\mu$~\cite{doob48}, see also~\cite[\S
4.2]{daprato1996}. This implies that the invariant measure is unique.
The conclusion of Doob's theorem can be formulated equivalently by saying that
$T(t)f \to \int_\Omega f \; \dx\mu$ pointwise on $\Omega$ for all $f \in
B_b(\Omega)$ as $t \to \infty$. Later, this result has been strengthened,
proving that in fact $T(t)' \nu \to \mu$ for any probability measure $\nu$ on $\Omega$ in the
total variation norm~\cite{seidler1997,stettner1994}. Applications of this
theorem include elliptic equations with unbounded coefficients
in $\mathds{R}^N$~\cite{metafune2002}, e.g.\ Ornstein-Uhlenbeck semigroups~\cite{daprato1995},
and infinite-dimensional diffusion~\cite{peszat1995}.

Since the original motivation for investigating such convergence behavior and most
examples to which Doob's theorem applies stem from probability theory, 
it is only natural that the known proofs are of probabilistic nature.
We consider it worthwhile to take a look at the result from a more
abstract viewpoint, which is strongly influenced by the theory of positive
semigroups. Doob's theorem can be obtained rather easily from a
very general result due to Greiner~\cite{greiner1982} about convergence of
positive semigroups on $L^p$-spaces, see Section~\ref{sec:convergence}. We 
regard this as interesting by itself.
Moreover, by using this different approach we are able to generalize
Doob's theorem. On the one hand, we can relax the usual irreducibility condition. On
the other hand, and maybe more importantly, we do no longer require the semigroup
to be Markovian, but only assume that the semigroup is positive and 
bounded. See Theorem~\ref{thm:pointwiseconvergence} for our main result.

Another motivation is to make Doob's theorem more accessible to 
people not working in probability theory, translating the
assumptions of the theorem into a more functional analytic language. For example,
even though it is natural to assume that the semigroup possesses transition
probabilities if one thinks of stochastic processes, such a condition is
unnatural and can be hard to verify for other classes of semigroups, for example
for semigroups arising from partial differential equations. Fortunately,
we are able to show in Section~\ref{sec:kernel} that for strong Feller
semigroups the existence of transition probabilities is equivalent to the
assertion that the semigroup has an integral kernel representation, the
existence of integral kernels being a well-studied topic in the theory of 
differential equations.
We show in Section~\ref{sec:examples} by a simple example how this equivalence helps to
apply Doob's theorem to diffusion semigroups.

\bigskip

Let us compare the conditions of our version of Doob's theorem with those of the classical one.
In \cite[\S 4.2]{daprato1996} the semigroup $\mathcal{T}=(T(t))_{t\geq 0}$ is assumed to be $t_0$-regular for some $t_0>0$,
i.e., for every $0\leq f\in B_b(\Omega)$
either $T(t_0)f=0$ or $(T(t_0)f)(x)>0$ for all $x\in\Omega$.
If $\mathcal{T}$ is strong Feller at $s_0 > 0$, i.e., $T(s_0)$ maps $B_b(\Omega)$ into $C_b(\Omega)$, the space of
continuous bounded functions, and $\mathcal{T}$ is $t_0$-irreducible, i.e.,
$(T(t_0)\mathds{1}_B)(x)>0$ for all $x\in\Omega$ and every non-empty open $B\subset \Omega$, then
$\mathcal{T}$ is $(t_0+s_0)$-regular. Indeed, this is the most common way to check that a semigroup
is $t_0$-regular for some $t_0 > 0$.  But in fact a positive $t_0$-regular semigroup
is necessarily $t_0$-irreducible in a sense made precise in the following remark. In this sense the classical
version of Doob's theorem assumes $t_0$-irreducibility.

\begin{rem}\label{rem:regirr}
	Let $\Omega$ be a Polish space. Then every positive $t_0$-regular semigroup with invariant 
	measure $\mu$ is $t_0$-ir\-re\-du\-ci\-ble up to neglecting a $\mu$-nullset. In fact,
	let $\mathcal{T}$ be $t_0$-regular and denote by $U$ the union of all open $\mu$-nullsets. Then
	\[ \int_\Omega T(t_0)\mathds{1}_U \dx\mu = \int_\Omega \mathds{1}_U \dx\mu = \mu(U) = 0\]
	shows that $T(t)\mathds{1}_U = T(t-t_0)T(t_0)\mathds{1}_U=0$ for all $t\geq t_0$. Thus
	$T(t)f = T(t)(f\mathds{1}_{U^c})$ for all $f \in B_b(\Omega)$.
	Now if $B$ is a non-empty relatively open subset of $\Omega \setminus U$, then
	\[ \int_\Omega T(t_0) \mathds{1}_B \dx\mu = \int_\Omega \mathds{1}_B \dx\mu = \mu(B) > 0,\]
	hence $(T(t_0)\mathds{1}_B)(x) > 0$ for all $x \in \Omega$ since $\mathcal{T}$ is $t_0$-regular.
	This proves that neglecting the nullset $U$, we obtain a $t_0$-irreducible semigroup.
\end{rem}

We show that for strong Feller Markov semigroups we can drop the assumption of $\mathcal{T}$ being $t_0$-irreducible
if we already know that the invariant measure $\mu$ is strictly positive, i.e., $\mu(B) > 0$ for every
non-empty open $B \subset \Omega$, and if we require $\Omega$ to be connected,
see Theorem~\ref{thm:pointwiseconvergence} and Proposition~\ref{prop:TL1irred}.
If we do not assume the semigroup to have a fixed point, the invariant measure to be strictly positive, 
or if we allow $\Omega$ to be disconnected, we have to assume some kind of irreducibility, namely 
that there is no closed set $\emptyset\neq A \subsetneq \Omega$ such that the closed
ideal $\{ f\in B_b(\Omega) : f_{\mid A}=0 \}$ of $B_b(\Omega)$ is invariant under the action of $\mathcal{T}$.
This notion of irreducibility is weaker than $t_0$-irreducibility.


Moreover, we show that the adjoint semigroup converges strongly on the space of measures, i.e., 
\[\lim_{t\to\infty} \Vert T'(t) \nu  - \applied{e}{\nu} \mu \Vert_{TV} = 0\]
for every finite Borel measure $\nu$, see Theorem~\ref{thm:adjointconv}.
This is the generalization of the above mentioned result by Seidler~\cite[Prop~2.5]{seidler1997} and 
Stettner~\cite[Thm~1]{stettner1994} to our setting.

Before proving our main theorem in Section~\ref{sec:convergence}, in Section~\ref{sec:kernel} we discuss
the assumption that $T'(t)\delta_x$ be a measure for all $x \in \Omega$.
Operators possessing this property are frequently called
kernel operators, whereas in some other areas of mathematics, particularly in the theory of partial differential equations,
the term \emph{kernel operator} is reserved for the class
of operators admitting an integral representation of the form
\[ (Tf)(x) = \int_\Omega k(x,y)f(y) \dx\mu(y) \quad (x\in\Omega,\, f\in B_b(\Omega))\]
for a jointly measurable function $k:\Omega\times\Omega \to \mathds{R}$, where the
equality is understood to hold for almost every $x \in \Omega$.
We show that for strong Feller semigroups admitting a strictly positive invariant measure
these two notions coincide, see Theorem~\ref{thm:ultrafeller}, so in the context of our article
there is no danger of confusion regarding these different notions.
This equivalence of different notions of kernels operators might be of independent mathematical interest,
compare for example~\cite{bukhvalov1978,arendt1994} where descriptions of kernel operators are given.
Part of this characterization is also needed for the proof of our main result.

\section{Notations}

Let us first fix some notation that will be used throughout.
By $\Omega$ we always denote a Hausdorff topological space.
We write $B_b(\Omega)$ for the space of bounded Borel measurable functions on $\Omega$
and $C_b(\Omega)$ for the space of bounded continuous functions on $\Omega$.

\begin{definition}
Let $\mu$ be a positive, finite Borel measure on $\Omega$.
We say that $\mu$ is \emph{strictly positive} if $\mu(U)>0$ for every non-empty open set $U\subset\Omega$.
If $T$ is a bounded linear operator on $B_b(\Omega)$ such that
\begin{align}
	\int_\Omega Tf \dx\mu = \int_\Omega f\dx\mu \quad (f\in B_b(\Omega)), \label{eqn:muinvariant}
\end{align}
i.e., $T'\mu=\mu$, the measure $\mu$ is called \emph{$T$-invariant}.
A bounded linear operator $T$ on $B_b(\Omega)$ that satisfies $TB_b(\Omega) \subset C_b(\Omega)$ will be called a \emph{strong Feller operator}.
\end{definition}

\begin{rem}
\label{rem:extension}
	Let $T$ be a positive (and hence bounded) linear operator on $B_b(\Omega)$ and $\mu\neq 0$ a positive, 
	finite, $T$-invariant Borel measure on $\Omega$.
	Then $Tf=0$ almost everywhere whenever $f$ vanishes almost everywhere.
	Thus, $T$ induces a bounded operator on the dense subspace $L^\infty(\Omega,\mu)$ of $L^1(\Omega,\mu)$, and
	$\Vert Tf \Vert_{L^1} \leq \Vert f\Vert_{L^1}$ for all $f\in L^\infty(\Omega,\mu)$.
	Hence $T$ induces a linear contraction on $L^1(\Omega,\mu)$ and by interpolation a bounded
	linear operator on $L^p(\Omega,\mu)$ for all $1< p <\infty$.
	By continuity, equation~\eqref{eqn:muinvariant} remains valid for all $f \in L^1(\Omega,\mu)$.
\end{rem}

\begin{definition}
	A family $\mathcal{T}=(T(t))_{t\geq 0}$ of bounded linear operators on a Banach space $X$ is called a
	\emph{semigroup on $X$} if $T(0)=I$ and $T(s+t) = T(s)T(t)$ for all $t,\,s \geq 0$.
	We denote by
	\[ \fix(\mathcal{T}) \coloneqq \{ x\in X  : T(t)x = x \text{ for all } t\geq 0\} \]
	the fixed space of a semigroup $\mathcal{T}$.

	A semigroup $\mathcal{T}=(T(t))_{t\geq 0}$ on $B_b(\Omega)$ is called a \emph{strong Feller semigroup} 
	if $T(t)$ is a strong Feller operator for every $t>0$.
	A positive, finite Borel measure $\mu$ on $\Omega$ is said to be $\mathcal{T}$-invariant 
	if it is $T(t)$-invariant for every $t\geq 0$.
\end{definition}

\begin{definition}
	If $\mu$ is a Borel measure on $\Omega$,
	then a positive semigroup $\mathcal{T}$ on $L^p(\Omega,\mu)$, $1\leq p <\infty$, is called \emph{irreducible}
	if the only closed $\mathcal{T}$-invariant ideals $J$ in $L^p(\Omega,\mu)$ are $J = \{0\}$
	and $J = L^p(\Omega,\mu)$. For the definition of ideals, we refer to~\cite[\S 8.7]{aliprantis2006}.
\end{definition}

The notion of an irreducible semigroup on $B_b(\Omega)$ is not used in this article for the
following reason.
While every closed ideal of $L^p(\Omega,\mu)$ is of the form 
\[ \{ f\in L^p(\Omega,\mu) : f=0 \text{ on } A\}\]
for some measurable $A\subset \Omega$ whenever $\mu$ is $\sigma$-finite, see e.g. \cite[III \S1 Ex.2]{schaefer1974},
there is no convenient description of the closed ideals of $B_b(\Omega)$.
Thus, the condition that a semigroup on $B_b(\Omega)$ be irreducible is hard to check and in fact quite restrictive.
Instead, it will be sufficient to assume that there is no closed set $\emptyset \neq A \neq \Omega$ such that
the closed ideal
\[ \{ f\in B_b(\Omega) : f=0 \text{ on } A\} \]
is invariant under the action of the semigroup, which is a mild irreducibility assumption.

The following definition of a kernel operator is motivated by the use of this name in differential equations.
Even though the usage of this term varies among the different fields in mathematics,
all common notions agree for strong Feller operators at least up to taking powers of the operator,
cf.~Theorem~\ref{thm:ultrafeller}.

\begin{definition}
	\label{def:kerneloperator}
	Let $\mu$ be a positive Borel measure on $\Omega$. 
	A linear operator $T\colon X\to X$ on $X=B_b(\Omega)$ or $X=L^p(\Omega,\mu)$, $1\leq p\leq \infty$, is called a 
	\emph{kernel operator} (with respect to $\mu$) if there exists an $(\Omega\times\Omega)$-measurable real-valued
	function $k$, where we equip $\Omega \times \Omega$ with the complete product $\sigma$-algebra, such that for every $f\in X$
	we have $k(x,\cdot)f(\cdot) \in L^1(\Omega,\mu)$ and
	\[ (Tf)(x) = \int_\Omega k(x,y) f(y) \dx\mu(y) \]
	for $\mu$-almost every $x\in\Omega$.
\end{definition}

\section{Characterizations of kernel operators}
\label{sec:kernel}

In this section we compare our definition of kernel operators with related notions.
We need some preparatory results first.

\begin{lemma}
	\label{lem:kernelcontinuity}
	Let $\Omega$ be second countable, $T$ a positive strong Feller operator on $B_b(\Omega)$, and $\mu\neq 0$ a positive, finite, $T$-invariant
	Borel measure on $\Omega$. If $T$ is a kernel operator,
	then there exists a $\mu$-nullset $N\subset\Omega$ such that
	$\lim (T^2 \mathds{1}_{A_n})(x) = 0$ for all $x\in\Omega\setminus N$ and
	every sequence $(A_n)$ of Borel sets satisfying $\lim \mu(A_n)=0$.
\end{lemma}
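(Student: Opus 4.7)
The strategy is to realize $T^2$ as a kernel operator with a density $k^{(2)}(x,\cdot) \in L^1(\mu)$ for $x$ outside a universal $\mu$-null set, so that absolute continuity of the Lebesgue integral forces $T^2\mathds{1}_{A_n}(x) \to 0$ whenever $\mu(A_n) \to 0$.

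First, since $T$ is positive I may assume the kernel $k$ is nonnegative, and since $T$ is strong Feller, $T\mathds{1}$ is bounded by some $M$. Defining $k^{(2)}(x,z) := \int_\Omega k(x,y) k(y,z) \dx\mu(y)$ via Tonelli and using the pointwise bound $\int k(y,z) \dx\mu(z) = T\mathds{1}(y) \leq M$ valid almost everywhere, one obtains $\int k^{(2)}(x,z) \dx\mu(z) \leq M \cdot T\mathds{1}(x) \leq M^2$ for $x$ outside a null set $N_0$. Applying the kernel representation of $T$ twice (once to $T\mathds{1}_A$, once by replacing $T\mathds{1}_A(y)$ with $\int_A k(y,z) \dx\mu(z)$) and swapping the integration order, the identity $T^2\mathds{1}_A(x) = \int_A k^{(2)}(x,z) \dx\mu(z)$ holds for each fixed Borel set $A$ and all $x$ outside a null set $N_A$ depending on $A$.

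To promote these $A$-dependent identities to a single null set I would invoke second countability of $\Omega$ to pick a countable Boolean algebra $\mathcal{A}_0$ of Borel sets generating the Borel $\sigma$-algebra, for instance the algebra generated by a countable base. Setting $N := N_0 \cup \bigcup_{A \in \mathcal{A}_0} N_A$ (still $\mu$-null), for $x \notin N$ the set functions $\varphi_x(B) := T^2\mathds{1}_B(x)$ and $\nu_x(B) := \int_B k^{(2)}(x,z) \dx\mu(z)$ agree on $\mathcal{A}_0$, are both bounded by $M^2$, and $\nu_x$ is a finite Borel measure absolutely continuous with respect to $\mu$. A Dynkin-type argument extends the equality to all Borel $B$: the crux is the $\sigma$-additivity of $\varphi_x$ at the single point $x$, which I would establish via monotone convergence in the kernel representation applied to increasing sequences $D_n \nearrow B$ drawn from $\mathcal{A}_0$ (using outer regularity and the second-countability-driven approximation of open sets by finite unions of base elements).

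With the identity in hand, the conclusion is immediate: for $x \notin N$ and any sequence of Borel sets $(A_n)$ with $\mu(A_n) \to 0$,
\[
	T^2\mathds{1}_{A_n}(x) = \int_{A_n} k^{(2)}(x,z) \dx\mu(z) \longrightarrow 0
\]
by absolute continuity of the Lebesgue integral, since $k^{(2)}(x,\cdot) \in L^1(\mu)$. I expect the main obstacle to be precisely the extension step from $\mathcal{A}_0$ to all Borel sets: finite additivity of $\varphi_x$, inherited from the linearity of $T^2$, does not automatically upgrade to $\sigma$-additivity at a fixed point $x$, and the upgrade must exploit the kernel structure of $T$ together with the regularity provided by second countability of $\Omega$.
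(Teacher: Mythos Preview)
Your proposal has a genuine gap at exactly the point you flag as the main obstacle, and the resolution you sketch does not close it. For $x \notin N$ and $D_n \in \mathcal{A}_0$ with $D_n \nearrow B$, monotone convergence in the kernel representation gives
\[
\varphi_x(D_n) = \nu_x(D_n) \longrightarrow \nu_x(B),
\]
but it does \emph{not} give $\varphi_x(D_n) \to \varphi_x(B) = T^2\mathds{1}_B(x)$. The kernel identity for $T$ (and hence for $T^2$) holds only almost everywhere with an exceptional set depending on the function, so for your fixed $x$ you cannot conclude $T^2\mathds{1}_{D_n}(x) \to T^2\mathds{1}_B(x)$ from $\mathds{1}_{D_n} \nearrow \mathds{1}_B$. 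Establishing $\sigma$-additivity of $\varphi_x$ is tantamount to showing that $(T^2)'\delta_x$ is a Borel measure, which is precisely the content of the proposition that \emph{follows} this lemma in the paper and uses the lemma as input; your argument is therefore circular. A further symptom that something is missing: you never use the strong Feller hypothesis in an essential way (boundedness of $T\mathds{1}$ already follows from $T\colon B_b(\Omega)\to B_b(\Omega)$), and you invoke outer regularity of $\mu$, which is not among the hypotheses.

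The paper's argument avoids building $k^{(2)}$ altogether and instead exploits strong Feller directly. Writing $g_n \coloneqq T\mathds{1}_{A_n}$, invariance of $\mu$ forces $g_n \to 0$ in $L^1(\mu)$ and hence in measure, while continuity of $g_n$ makes the level sets $\{g_n < 1/n\}$ open. Second countability then lets one pick basic open sets $B_n \in \mathcal{B}$ inside these level sets with $\mathds{1}_{B_n} \to \mathds{1}_\Omega$ a.e.\ along a subsequence. The pointwise bound
\[
g_n \le \tfrac{1}{n}\mathds{1}_\Omega + \|T\mathds{1}_\Omega\|_\infty \,\mathds{1}_{\Omega\setminus B_n}
\]
reduces everything to showing $(T\mathds{1}_{\Omega\setminus B_n})(x) \to 0$ for $x \notin N$, and \emph{this} follows from dominated convergence in the kernel of $T$ because both $\Omega$ and $B_n$ lie in the countable family $\mathcal{B}$ on which the kernel identity is known to hold at $x$. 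The strong Feller step is what makes it possible to trade the arbitrary sets $A_n$ for sets from the countable family, which is exactly the move your Dynkin argument cannot supply.
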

\begin{proof}
	Let $\mathcal{B}$ be countable basis of $\Omega$ that is closed under finite unions and contains $\Omega$.
	Since $T$ is a kernel operator, there exists a $\mu$-nullset $N\subset \Omega$ such that
	\begin{align}
	 (T\mathds{1}_B)(x) = \int_\Omega k(x,y) \mathds{1}_B(y) \dx\mu(y) \quad (x\in\Omega\setminus N,\,B\in\mathcal{B}).\label{eqn:kernelbasis}
	 \end{align}

	Let $(A_n)$ be a sequence of Borel sets such that $\lim \mu(A_n)=0$ 
	and define $g_n\coloneqq T\mathds{1}_{A_n}$. Since $\mu$ is $T$-invariant, $\lim g_n =0$ in $L^1(\Omega,\mu)$ and hence $\mu$-in measure.
	Therefore, after passing to a subsequence, we may assume that
	\[ \lim_{n\to\infty} \mu ( \Omega\setminus \{ g_n \geq 1/n \}) = \lim_{n\to\infty} \mu (\{g_n< 1/n \}) = \mu(\Omega).\]
	Since $\{g_n < 1/n\}$ is open,
	for every $n\in\N$ there exists $B_n \in \mathcal{B}$ such that $B_n\subset \{ g_n < 1/n\}$
	and $\mu(B_n)\geq \mu(\{ g_n < 1/n\}) - 1/n$. Hence, $\lim \mu(B_n) = \mu(\Omega)$ and thus, after passing to a subsequence,
	$\mathds{1}_{B_n}(x) \to 1$ for almost every $x\in\Omega$.
	By \eqref{eqn:kernelbasis} this proves that $\lim (T\mathds{1}_{\Omega\setminus B_n})(x) = 0$ for all $x\in\Omega\setminus N$.
	Since
	\[ g_n \leq \tfrac{1}{n} \mathds{1}_\Omega + \Vert T\mathds{1}_\Omega\Vert_\infty \mathds{1}_{\Omega\setminus B_n} \quad (n\in\N),\]
	this proves the claim.
\end{proof}

\begin{prop}
	\label{prop:kerneltomeasures}
	Let $\Omega$ be second countable, $T$ a positive strong Feller operator,
	and $\mu$ a strictly positive, finite, $T$-invariant Borel measure on $\Omega$.
	If $T$ is a kernel operator,
	then for every $x\in\Omega$ there exists a finite positive Borel measure $\mu_x$ such that
	\[ (T^2 f)(x) = \int_\Omega f\dx\mu_x\]
	holds for all $f\in B_b(\Omega)$.
\end{prop}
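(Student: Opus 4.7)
The plan is to define $\mu_x(B) \coloneqq (T^2 \mathds{1}_B)(x)$ for every Borel $B \subseteq \Omega$ and every $x \in \Omega$, and to show that this defines a finite Borel measure representing the positive linear functional $f \mapsto (T^2 f)(x)$ on $B_b(\Omega)$.

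First I would handle the ``good'' points $x \in \Omega \setminus N$, where $N$ is the $\mu$-nullset supplied by Lemma~\ref{lem:kernelcontinuity}. For such $x$ the set function $\mu_x$ is positive and finitely additive by linearity of $T^2$. If $B_n \downarrow \emptyset$, then $\mu(B_n) \to 0$ by continuity from above of the finite measure $\mu$, so Lemma~\ref{lem:kernelcontinuity} yields $(T^2 \mathds{1}_{B_n})(x) \to 0$. Hence $\mu_x$ is countably additive, i.e., a finite Borel measure, and the identity $(T^2 f)(x) = \int_\Omega f \dx\mu_x$ extends from indicators to simple $f$ by linearity and to all $f \in B_b(\Omega)$ by uniform approximation (using finiteness of $\mu_x$).

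The harder part is $x \in N$. Strict positivity of $\mu$ forces $\Omega \setminus N$ to be dense in $\Omega$, and second countability makes $\Omega$ first countable, so for each $x \in N$ we can choose a sequence $y_n \in \Omega \setminus N$ with $y_n \to x$. The strong Feller property implies $T^2 \mathds{1}_B \in C_b(\Omega)$ for every Borel $B$, so $\mu_{y_n}(B) = (T^2 \mathds{1}_B)(y_n) \to (T^2 \mathds{1}_B)(x) = \mu_x(B)$. This exhibits $\mu_x$ as the setwise limit on the Borel $\sigma$-algebra of the finite positive measures $\mu_{y_n}$ produced in the previous step, and Nikodym's convergence theorem then guarantees that $\mu_x$ is itself countably additive, hence a finite Borel measure. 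Moreover, the total masses $\mu_{y_n}(\Omega) = (T^2 \mathds{1}_\Omega)(y_n)$ are uniformly bounded by $\|T\|^2$, so uniform approximation of $f \in B_b(\Omega)$ by simple functions gives $\int_\Omega f \dx\mu_{y_n} \to \int_\Omega f \dx\mu_x$; combining this with $(T^2 f)(y_n) \to (T^2 f)(x)$ from continuity of $T^2 f$ yields $(T^2 f)(x) = \int_\Omega f \dx\mu_x$.

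The main obstacle is precisely the passage from $x \notin N$ to $x \in N$. A naive pointwise-monotone argument would try to conclude $(T^2 \mathds{1}_{B_n})(x) \to 0$ for $x \in N$ by considering the continuous decreasing functions $T^2 \mathds{1}_{B_n}$, which converge to zero on the dense set $\Omega \setminus N$, but their pointwise infimum is only upper semicontinuous and can remain strictly positive on a $\mu$-nullset even while vanishing $\mu$-a.e. Nikodym's theorem sidesteps this difficulty by producing $\sigma$-additivity of $\mu_x$ purely from setwise convergence of finite measures on a $\sigma$-algebra, independently of any pointwise limit behavior of the functions $T^2 \mathds{1}_{B_n}$ on the nullset $N$.
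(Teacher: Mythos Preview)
Your proposal is correct and follows essentially the same route as the paper: define $\mu_x(A)=(T^2\mathds{1}_A)(x)$, use Lemma~\ref{lem:kernelcontinuity} to obtain countable additivity off a $\mu$-nullset $N$, and then pass to arbitrary $x$ by approximating through $\Omega\setminus N$ (dense by strict positivity of $\mu$) via continuity of $T^2\mathds{1}_A$ together with the Vitali--Hahn--Saks/Nikodym convergence theorem. The only cosmetic differences are that the paper cites Vitali--Hahn--Saks (Dunford--Schwartz, Cor.~III.7.4) rather than Nikodym, and writes the limit as $y\to x$ instead of extracting a sequence via first countability as you do.
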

\begin{proof}


	For every $x\in\Omega$ and every Borel set $A\subset \Omega$ define 
	$\mu_x(A) = (T^2\mathds{1}_A)(x)$. Then $\mu_x(\emptyset)=0$, $\mu_x(\Omega)<\infty$, and $\mu_x$ is finitely additive
	for all $x\in\Omega$.
	By Lemma~\ref{lem:kernelcontinuity}, there exists a nullset $N\subset \Omega$ such that 
	$\mu_x$ is $\sigma$-additive if $x\in\Omega\setminus N$.
	Since $\Omega\setminus N$ is dense in $\Omega$ and
	\[ \lim_{y\to x} \mu_y (A) = \lim_{y\to x}(T^2 \mathds{1}_A)(y) = (T^2 \mathds{1}_A)(x) = \mu_x(A) \]
	holds for every $x\in\Omega$ and every Borel set $A\subset \Omega$,
	$\mu_x$ is a measure for all $x\in\Omega$ by the Vitali-Hahn-Saks theorem \cite[Cor III.7.4]{dunford1958}.
	Hence
	\[ (T^2f)(x) = \int_\Omega f\dx\mu_x \quad (x\in\Omega)\]
	holds for every simple function $f\in B_b(\Omega)$ and thus for every $f \in B_b(\Omega)$.
\end{proof}

The idea of the following proposition is essentially taken from~\cite{hairer09}.
Nevertheless, we include a proof here in order to be more self-contained.

\begin{prop}
	\label{prop:densitytoequicontinuity}
		Let $\Omega$ be second countable,
		$T$ a positive strong Feller operator, and $\mu\neq 0$ a positive, finite, $T$-invariant Borel measure such that
		for every $x\in\Omega$ there exists $k_x \in L^1(\Omega,\mu)$ satisfying 
		\[ (Tf)(x) = \int_\Omega k_x f\dx\mu \quad (f\in B_b(\Omega)).\]
		Then for each $c \ge 0$ the family $\{ T^2 f : f\in B_b(\Omega), \vert f\vert\leq c \}$ is equicontinuous.
\end{prop}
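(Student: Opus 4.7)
The plan is to work by contradiction at a point $x_0 \in \Omega$ and, using that $\Omega$ is second countable (hence first countable), reduce the claim to the following sequential assertion: if $x_n \to x_0$ and $f_n \in B_b(\Omega)$ satisfy $\vert f_n\vert \leq c$, then $(T^2 f_n)(x_n) - (T^2 f_n)(x_0) \to 0$. Writing $(T^2 f)(x) = \int_\Omega k_x \cdot Tf \dx\mu$, this difference equals $\int_\Omega (k_{x_n} - k_{x_0}) \cdot Tf_n \dx\mu$.

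The first key step is to find a common $L^1$-limit for the sequence $\{Tf_n\}$. Since $\Omega$ is second countable, $L^1(\Omega, \mu)$ is separable, so the closed unit ball of $L^\infty(\Omega, \mu)$ is weak-$\ast$ sequentially compact by Banach--Alaoglu. Passing to a subsequence, I may assume $f_n \to f_\infty$ weak-$\ast$ in $L^\infty$. Testing against each $k_x \in L^1(\Omega, \mu)$ yields $(Tf_n)(x) \to (Tf_\infty)(x)$ pointwise on $\Omega$, and the uniform bound $\vert Tf_n\vert \leq c\|T\mathds{1}\|_\infty$ lets dominated convergence upgrade this to $Tf_n \to g := Tf_\infty$ strongly in $L^1(\Omega, \mu)$.

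I would then split the integral as $\int (k_{x_n} - k_{x_0}) g \dx\mu + \int (k_{x_n} - k_{x_0})(Tf_n - g) \dx\mu$. The first piece equals $(T^2 f_\infty)(x_n) - (T^2 f_\infty)(x_0)$ and vanishes in the limit because $T^2 f_\infty \in C_b(\Omega)$ by the strong Feller property. For the second, applying strong Feller to indicator functions shows that $k_{x_n} \dx\mu \to k_{x_0} \dx\mu$ setwise, and the Vitali--Hahn--Saks theorem then yields uniform integrability of $\{k_{x_n}\}$, and hence of $\{\vert k_{x_n} - k_{x_0}\vert\}$. Given $\eps > 0$, I choose $K$ such that the tail $\int_{\{\vert k_{x_n} - k_{x_0}\vert > K\}} \vert k_{x_n} - k_{x_0}\vert \dx\mu$ is at most $\eps$ uniformly in $n$; the part where $\vert k_{x_n} - k_{x_0}\vert \le K$ is then controlled by $K \|Tf_n - g\|_{L^1}$, which tends to $0$, while the tail part is bounded by $2c\|T\mathds{1}\|_\infty \cdot \eps$. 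Letting $n\to\infty$ and then $\eps \to 0$ gives the required limit, contradicting the hypothesis.

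The main obstacle, and the reason $T^2$ rather than $T$ appears in the statement, is that $x \mapsto k_x$ need not be continuous into the norm topology of $L^1(\Omega,\mu)$, only into its weak topology, so $\{Tf : \vert f\vert \leq c\}$ need not itself be equicontinuous. The second application of $T$ is exactly what allows one to convert weak-$\ast$ compactness on the $\{f_n\}$ into $L^1$-compactness of the $\{Tf_n\}$, which can then be combined with the Vitali--Hahn--Saks uniform integrability of $\{k_{x_n}\}$ in the truncation argument above.
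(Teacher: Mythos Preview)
Your argument is correct and shares its opening moves with the paper's proof: both set up a contradiction at a point, use second countability to extract a weak-$*$ convergent subsequence $f_n \to f_\infty$ in $L^\infty(\Omega,\mu)$ via separability of $L^1(\Omega,\mu)$, and deduce pointwise convergence $(Tf_n)(z)\to (Tf_\infty)(z)$ from the kernel representation. The divergence is in how the remainder $(T^2 f_n)(x_n)-(T^2 f_\infty)(x_n)$ is handled. The paper introduces the decreasing envelope $h_n(z)\coloneqq\sup_{m\ge n}|T(f_m-f_\infty)(z)|$, uses positivity and dominated convergence to get $(Th_n)(z)\to 0$ for every $z$, and then exploits monotonicity together with the continuity of each $Th_m$ to force $(Th_n)(x_n)\to 0$; this bounds the remainder directly. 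You instead upgrade pointwise convergence of $Tf_n$ to $L^1$-convergence by dominated convergence, observe that strong Feller gives setwise convergence of the measures $k_{x_n}\,\mathrm{d}\mu$, invoke Vitali--Hahn--Saks for uniform integrability of $\{k_{x_n}\}$, and finish with a truncation argument. The paper's route is slightly more self-contained, relying only on the order structure and avoiding Vitali--Hahn--Saks; your route makes the role of the kernels $k_x$ more explicit and isolates the measure-theoretic content (uniform integrability along convergent sequences in $\Omega$), which connects nicely to how such estimates are used elsewhere in the paper, e.g.\ in Lemma~\ref{lem:equiintegrable}.
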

\begin{proof}
	Let $x\in\Omega$, $c \ge 0$, and $B\coloneqq \{ f\in B_b(\Omega) : \vert f \vert\leq c \}$.
	Assume that $\{ T^2 f: f\in B\}$ is not equicontinuous at $x$. Then there exists $\eps_0>0$ as well as sequences $(f_n)\subset B$
	and $(x_n) \subset \Omega$ such that $\lim x_n = x$ and 
	\[ \vert (T^2 f_n)(x_n)  - (T^2 f_n)(x) \vert \geq \eps_0 \quad (n\in\N).\]
	Since $L^1(\Omega,\mu)$ is separable~\cite[365Xp]{fremlin2004} and the sequence $(f_n)$ is uniformly bounded,
	there exists $f\in B$ such that 
	\[ \lim_{n\to\infty} \int_\Omega f_n g\;\dx\mu  = \int_\Omega f g\;\dx\mu \quad (g\in L^1(\Omega,\mu))\]
	after passing to a subsequence. In particular,
	\begin{align}
	 \lim_{n\to\infty} (Tf_n)(z) = \lim_{n\to\infty} \int_\Omega k_z f_n \dx\mu = \int_\Omega k_z f \dx\mu 
	 = (Tf)(z) \quad (z\in\Omega).\label{eqn:T(tau)g_n}
	\end{align}

	For $n\in\N$ and $z\in\Omega$ define $h_n(z) \coloneqq \sup_{m\geq n} \vert (T(f_m-f))(z)\vert$. Then $(h_n)$ is 
	a decreasing sequence of positive $\mu$-measurable functions such that
	$h_n \leq 2c \; T\mathds{1}_\Omega$.
	Moreover, $\lim_{n\to\infty} h_n(z)=0$ for all $z\in\Omega$ by \eqref{eqn:T(tau)g_n}.
	Now the dominated convergence theorem yields that $\lim (Th_n)(z) = 0$ for all $z\in\Omega$. Since
	\[ \limsup_{n\to\infty} (Th_n)(x_n) \leq \limsup_{n\to\infty} (Th_m)(x_n) = (Th_m)(x) \quad (m\in\N),\]
	this implies that $\lim_{n\to\infty} (Th_n)(x_n)=0$.
	Thus
	\begin{align*}
		& \limsup_{n\to\infty} \vert (T^2 f_n)(x_n) - (T^2f)(x) \vert \\
		& \quad \leq \limsup_{n\to\infty} \bigl(\vert (T^2f_n)(x_n) - (T^2f)(x_n) \vert + \limsup_{n\to\infty}\vert (T^2f)(x_n) - (T^2f)(x) \vert \bigr)\\
		& \quad \leq \limsup_{n\to\infty} (T h_n)(x_n) = 0.
	\end{align*}
	In addition, $\lim (T^2 f_n)(x) = (T^2 f)(x)$ by \eqref{eqn:T(tau)g_n}.
	Thus
	\[ \lim_{n\to\infty} \vert (T^2 f_n)(x_n) - (T^2 f_n)(x) \vert = 0,\]
	which proves equicontinuity of $B$.
\end{proof}

The following  characterization of kernel operators is the main result of this section.
As a consequence of this theorem, a strong Feller semigroup consists of kernel operators if and only if
it has a representation via transition probabilities.

\begin{theorem}
	\label{thm:ultrafeller}
	Let $\Omega$ be second countable, $\mathcal{T}=(T(t))_{t\geq 0}$ a 
	positive strong Feller semigroup on $B_b(\Omega)$, and $\mu$ a strictly positive, finite, $\mathcal{T}$-invariant Borel measure on $\Omega$.
	Then the following assertions are equivalent.
	\begin{enumerate}[(i)]
		\item\label{assertionkernel} For every $t > 0$, the operator $T(t)$ is a kernel operator with respect to $\mu$
		in the sense of Definition~\ref{def:kerneloperator}.
		\item\label{assertion1*}
		For all $t>0$ and every uniformly bounded sequence $(f_n)\subset B_b(\Omega)$ such that
		$\lim f_n = 0$ $\mu$-in measure holds $\lim (T(t)f_n)(x) =0$ for almost every $x\in \Omega$.
		\item\label{assertion1}
		For all $t>0$ and every uniformly bounded sequence $(f_n)\subset B_b(\Omega)$ such that
		$\lim f_n = 0$ $\mu$-in measure holds $\lim (T(t)f_n)(x) =0$ for every $x\in \Omega$.
		\item\label{assertion2} For every $x\in \Omega$ and $t>0$ there exists a positive finite Borel measure $\mu_{t,x}$ such that
		\[ (T(t)f)(x)= \int_\Omega f\dx\mu_{t,x} \quad (f\in B_b(\Omega)),\]
		i.e., $T(t)'\delta_x \in B_b(\Omega)'$ is a measure for each $x\in\Omega$ and $t>0$.
		\item\label{assertion3} For every $x\in\Omega$ and $t>0$ there exists $k_{t,x} \in L^1(\Omega,\mu)$ such that
		\[ (T(t)f)(x) = \int_\Omega k_{t,x} f\dx\mu \quad (f\in B_b(\Omega))).\]
		\item\label{assertion4} The family $\{ T(t)f : f\in B_b(\Omega), \vert f\vert\leq c \}$ is equicontinuous for all $c,t>0$.
	\end{enumerate}
\end{theorem}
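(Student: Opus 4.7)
The plan is to close the cycle (i) $\Rightarrow$ (iv) $\Rightarrow$ (v) $\Rightarrow$ (vi) $\Rightarrow$ (iii) $\Rightarrow$ (ii) $\Rightarrow$ (i).

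The implication (i) $\Rightarrow$ (iv) is immediate from Proposition~\ref{prop:kerneltomeasures} applied to $T(t/2)$ (which is a kernel operator by (i) at time $t/2$), using $T(t) = T(t/2)^2$. For (iv) $\Rightarrow$ (v), I would fix $t > 0$ and a $\mu$-null Borel set $A$: by Remark~\ref{rem:extension} the function $T(t/2)\mathds{1}_A$ vanishes $\mu$-a.e., and being continuous by the strong Feller property on a space where $\mu$ is strictly positive it must vanish identically, whence $\mu_{t,x}(A) = (T(t)\mathds{1}_A)(x) = 0$ for every $x$. The Radon--Nikodym theorem then supplies a density $k_{t,x} \in L^1(\Omega,\mu)$. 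The implication (v) $\Rightarrow$ (vi) is Proposition~\ref{prop:densitytoequicontinuity} applied to $T(t/2)$.

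For (vi) $\Rightarrow$ (iii), I would fix $t > 0$ and a uniformly bounded sequence $f_n \to 0$ in $\mu$-measure. Remark~\ref{rem:extension} yields $T(t)f_n \to 0$ in $L^1(\Omega,\mu)$. If $(T(t)f_n)(x) \not\to 0$ for some $x$, then along a subsequence $|(T(t)f_{n_k})(x)| \ge \varepsilon_0 > 0$, and equicontinuity at $x$ of $\{T(t)f : |f|\leq c\}$ produces a neighborhood $U$ of $x$ on which $|T(t)f_{n_k}| \ge \varepsilon_0/2$; since $\mu$ is strictly positive, $\mu(U) > 0$, contradicting $L^1$-convergence to zero. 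The implication (iii) $\Rightarrow$ (ii) is trivial.

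The technical heart is the closing implication (ii) $\Rightarrow$ (i). The condition (ii) is precisely Bukhvalov's well-known characterization of kernel operators, and so my primary plan is to invoke Bukhvalov's theorem~\cite{bukhvalov1978,arendt1994} directly, applied to the operator induced by $T(t)$ on $L^\infty(\Omega,\mu)$ (which exists by Remark~\ref{rem:extension}). A more self-contained but more laborious route would be: first obtain (iv) from (ii) by adapting the Vitali--Hahn--Saks argument of Proposition~\ref{prop:kerneltomeasures} --- applying (ii) to $T(t/2)$ twice to get $\sigma$-additivity of $\mu_{t,x}(A) := (T(t)\mathds{1}_A)(x)$ for $x$ outside an exceptional null set, then using strong Feller continuity of $x \mapsto (T(t)\mathds{1}_A)(x)$ together with strict positivity of $\mu$ to extend to every $x$ via \cite[Cor.~III.7.4]{dunford1958} --- next pass to $k_{t,x} \in L^1(\Omega,\mu)$ via the already-established (iv) $\Rightarrow$ (v), and finally construct a jointly $(\Omega \times \Omega)$-measurable representative using Pettis' theorem (separability of $L^1(\Omega,\mu)$ follows from second countability of $\Omega$) and a Fubini-type argument. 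I expect this last joint-measurability construction, needed to pass from the pointwise family $\{k_{t,x}\}$ to a jointly measurable kernel on $\Omega \times \Omega$, to be the most delicate step.
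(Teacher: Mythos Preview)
Your proposal is correct and follows essentially the same route as the paper: the paper proves the chain (i) $\Rightarrow$ (iv) $\Rightarrow$ (v) $\Rightarrow$ (vi) $\Rightarrow$ (iii) $\Rightarrow$ (ii) exactly as you do (via Proposition~\ref{prop:kerneltomeasures}, the Radon--Nikodym argument using strong Feller plus strict positivity of $\mu$, Proposition~\ref{prop:densitytoequicontinuity}, and the equicontinuity-plus-$L^1$-convergence contradiction), and closes the loop by citing Bukhvalov's theorem for the equivalence (i) $\Leftrightarrow$ (ii). Your alternative self-contained route for (ii) $\Rightarrow$ (i) is not pursued in the paper, and indeed the joint-measurability step you flag is precisely the content of Bukhvalov's characterization, so invoking it directly is the cleaner choice.
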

\begin{proof}
	(\ref{assertionkernel}) $\Leftrightarrow$ (\ref{assertion1*}): This equivalence is due to Bukhvalov \cite[Thm~2]{bukhvalov1978},
	see also \cite[Thm 3.3.11]{meyer1991}.

	(\ref{assertionkernel}) $\Rightarrow$ (\ref{assertion2}): This follows from Proposition~\ref{prop:kerneltomeasures} and the semigroup property.

	(\ref{assertion2}) $\Rightarrow$ (\ref{assertion3}):
	By the Radon-Nikodym theorem, it suffices to show that each $\mu_{t,x}$ is absolutely continuous with respect to $\mu$.
	Let $t>0$, $x\in\Omega$, and $A\subset \Omega$ be a Borel set with $\mu(A)=0$.  Then
	\[ \int_\Omega T(t)\mathds{1}_A \dx \mu = \int_\Omega \mathds{1}_A \dx \mu = \mu(A)=0 \]
	and hence $T(t)\mathds{1}_A=0$ almost everywhere.
	Since $T(t)\mathds{1}_A$ is continuous and $\mu$ is strictly positive, we conclude that $(T(t)\mathds{1}_A)(x) = 0$
	for all $x \in \Omega$, hence
	\[ 0 = (T(t)\mathds{1}_A)(x) = \int_\Omega \mathds{1}_A \dx \mu_{t,x} = \mu_{t,x}(A) \quad(x\in\Omega).\]

	(\ref{assertion3}) $\Rightarrow$ (\ref{assertion4}): This follows from Proposition~\ref{prop:densitytoequicontinuity}
	and the semigroup property.

	(\ref{assertion4}) $\Rightarrow$ (\ref{assertion1}): Let $t>0$ and fix $(f_n)\subset B_b(\Omega)$ such that
	$\vert f_n \vert \leq c$ for some $c > 0$ and $\lim f_n = 0$ in measure.
	Then $(f_n)$ converges to zero in the norm of $L^1(\Omega,\mu)$.
	Assume that there exists $x\in\Omega$ such that $(T(t)\vert f_n\vert)(x) \not\to 0$ as $n$ tends to infinity.
	Then there exists $\eps>0$ and a subsequence
	$(f_{n_k})$ such that $(T(t)\vert f_{n_k} \vert)(x) > \eps $ for all $k\in\N$. By assumption (\ref{assertion4}),
	this implies there exists an open neighborhood $U$ of $x$ such that
	\[ (T(t)\vert f_{n_k}\vert)(y) \geq \eps \;\text{for all}\; y\in U.\]
	Thus
	\[ 0< \eps\mu(U) \leq \limsup_{k\to\infty} \int_\Omega (T(t)\vert f_{n_k}\vert) \dx\mu 
	=  \limsup_{k\to\infty} \int_\Omega \vert f_{n_k} \vert \dx\mu =0,\]
	a contradiction.

	(\ref{assertion1}) $\Rightarrow$ (\ref{assertion1*}): This is trivial.
\end{proof}

\begin{rem}
	If the semigroup is merely eventually strong Feller, the assertions of Theorem~\ref{thm:ultrafeller}
	remain equivalent if in each assertion ``for all $t>0$'' is replaced by ``for sufficiently large $t>0$''.
\end{rem}

\begin{rem}\label{rem:L1kernels}
	Assertion (\ref{assertionkernel}) of Theorem~\ref{thm:ultrafeller} implies that the extension of the semigroup to $L^1(\Omega,\mu)$
	consists of kernel operators.
	In fact, the representation of Definition~\ref{def:kerneloperator}
	is valid for bounded functions and by the monotone convergence theorem,
	the same representation remains valid almost everywhere for general $f \in L^1(\Omega,\mu)$.
\end{rem}

\begin{rem}\label{rem:equicontorbits}
	If the semigroup is eventually bounded, i.e., $\sup_{t\geq \tau} \Vert T(t)\Vert <\infty$ for some $\tau>0$,
	assertion (\ref{assertion4}) of Theorem~\ref{thm:ultrafeller} implies that
	$\{ T(t)f : t>t_0\}$ is equicontinuous for all $f\in B_b(\Omega)$ and sufficiently large $t_0>0$.
\end{rem}

The following measure theoretic corollary may be interesting in its own right. Given an integral representation of a semigroup
it allows us to find an $(\Omega\times\Omega)$-measurable kernel for which the representation holds pointwise.

\begin{cor}
	Let $\Omega$ be second countable, $\mathcal{T}=(T(t))_{t\geq 0}$ a 
	positive strong Feller semigroup, and $\mu$ a strictly positive, finite, outer regular, $\mathcal{T}$-invariant 
	Borel measure on $\Omega$. If one of the equivalent conditions of Theorem~\ref{thm:ultrafeller} is satisfied,
	then for every $t>0$ there exists an $(\Omega\times\Omega)$-measurable function $\tilde k_t$ such that
	\[ (T(t)f)(x) = \int_\Omega \tilde k_t(x,y)f(y) \dx\mu(y)\]
	holds for all $x\in\Omega$ and $f\in B_b(\Omega)$.
\end{cor}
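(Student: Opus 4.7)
The plan is to start from the jointly measurable kernel $k_t$ provided by the hypothesis that $T(t)$ is a kernel operator, verify that it already gives a pointwise representation outside a $\mu$-nullset $N$, and then patch it on $N$ using the $L^1$-densities furnished by Theorem~\ref{thm:ultrafeller}(\ref{assertion3}). Joint measurability will survive this patching because $N\times\Omega$ is $\mu\otimes\mu$-null and the product $\sigma$-algebra is taken to be complete.

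First I would fix a countable basis $\mathcal{B}$ of $\Omega$ closed under finite intersections and apply the definition of kernel operator to each of the countably many indicators $\mathds{1}_B$, $B\in\mathcal{B}$. Taking the union of the associated Borel $\mu$-nullsets produces a single Borel $\mu$-nullset $N$ such that
\[ (T(t)\mathds{1}_B)(x) = \int_\Omega k_t(x,y)\mathds{1}_B(y) \dx\mu(y) \quad (x\in\Omega\setminus N,\, B\in\mathcal{B}). \]
By Theorem~\ref{thm:ultrafeller}(\ref{assertion2}), the functional $f\mapsto (T(t)f)(x)$ is represented by a positive finite Borel measure $\mu_{t,x}$ for every $x\in\Omega$. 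For $x\in\Omega\setminus N$ the two finite Borel measures $\mu_{t,x}$ and $A\mapsto \int_A k_t(x,y)\dx\mu(y)$ agree on the $\pi$-system $\mathcal{B}$, which generates the Borel $\sigma$-algebra, so by the $\pi$-$\lambda$ theorem they agree on all Borel sets. Hence $k_t(x,\cdot)$ is a density of $\mu_{t,x}$ with respect to $\mu$, and the identity $(T(t)f)(x) = \int_\Omega k_t(x,y) f(y) \dx\mu(y)$ holds pointwise for every $x\in\Omega\setminus N$ and every $f\in B_b(\Omega)$.

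For $x\in N$, Theorem~\ref{thm:ultrafeller}(\ref{assertion3}) supplies a Borel measurable density $k_{t,x}\in L^1(\Omega,\mu)$ with $(T(t)f)(x) = \int k_{t,x} f \dx\mu$ for all $f\in B_b(\Omega)$. Defining
\[ \tilde k_t(x,y) \coloneqq \begin{cases} k_t(x,y), & x\in\Omega\setminus N,\\ k_{t,x}(y), & x\in N, \end{cases} \]
the required pointwise integral representation holds for every $x\in\Omega$ and $f\in B_b(\Omega)$ by construction. The main obstacle, and the only nontrivial point, is joint measurability of $\tilde k_t$ with respect to the complete product $\sigma$-algebra. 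On the Borel product set $(\Omega\setminus N)\times\Omega$ the function $\tilde k_t$ coincides with the jointly measurable $k_t$. On $N\times\Omega$ the values of $\tilde k_t$ are picked essentially arbitrarily, but this is harmless because $\mu(N)=0$ forces $(\mu\otimes\mu)(N\times\Omega)=0$, so every subset of $N\times\Omega$ lies in the completion of the product $\sigma$-algebra and every real-valued function supported there is measurable in the required sense. This argument bypasses outer regularity of $\mu$, which could alternatively be invoked to identify $\mu_{t,x}$ through its values on open sets.
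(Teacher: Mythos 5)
Your proposal is correct, and it reaches the conclusion by a genuinely different route in the middle step. The paper extends the representation from basis sets to all of $B_b(\Omega)$ by limit arguments: it uses assertion~(\ref{assertion1}) of Theorem~\ref{thm:ultrafeller} to pass from basis sets to open sets (via increasing unions, which is why the paper takes the basis closed under finite \emph{unions}), then uses outer regularity of $\mu$ to pass from open sets to arbitrary Borel sets, and finally uniform approximation by simple functions. You instead invoke the representing measures $\mu_{t,x}$ from assertion~(\ref{assertion2}) and identify $\mu_{t,x}$ with the set function $A\mapsto\int_A k_t(x,\cdot)\dx\mu$ by a uniqueness-of-measures ($\pi$-$\lambda$) argument on a generating $\pi$-system; this makes the outer regularity hypothesis superfluous, which is a genuine (if modest) improvement. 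Two small caveats you should patch: first, agreement of two finite measures on a generating $\pi$-system does not by itself give equality --- you also need agreement on $\Omega$ (or on an increasing sequence of $\pi$-system sets exhausting $\Omega$), so adjoin $\Omega$ to your family and apply the kernel representation to $\mathds{1}_\Omega$ as well, exactly as the paper's basis ``contains $\Omega$''; second, a priori $k_t(x,\cdot)$ is only real-valued, so $A\mapsto\int_A k_t(x,\cdot)\dx\mu$ is a finite \emph{signed} measure --- either observe that positivity of $T(t)$ forces $k_t(x,\cdot)\ge 0$ $\mu$-a.e.\ for a.e.\ $x$ (enlarge $N$ accordingly), or note that the Dynkin-class argument applies verbatim to finite signed measures. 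The final patching on the nullset $N$ with the densities $k_{t,x}$ from assertion~(\ref{assertion3}), and the appeal to completeness of the product $\sigma$-algebra to handle the $\mu\otimes\mu$-null set $N\times\Omega$, coincide with the paper's construction; your explicit measurability justification there is welcome, since the paper leaves it implicit.
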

\begin{proof}
	Let $t>0$ and fix be countable basis $\mathcal{B}$ of $\Omega$ that is closed under finite unions and contains $\Omega$.
	Since $T(t)$ is a kernel operator, there exists a $\mu$-nullset $N\subset \Omega$ such that
	 \[ (T(t)\mathds{1}_B)(x) = \int_\Omega k_{t}(x,y) \mathds{1}_B(y) \dx\mu(y) \quad (x\in\Omega\setminus N,\,B\in\mathcal{B}).\]
	By assertion~(\ref{assertion1}) of Theorem~\ref{thm:ultrafeller}
	 \[ \lim_{n\to\infty} (T(t)\mathds{1}_{A_n})(x)= (T(t)\mathds{1}_A)(x) \]
	 for all $x\in\Omega$ and all sequences $A_n\subset \Omega$ of Borel sets such that $\lim \mathds{1}_{A_n}=\mathds{1}_A$ $\mu$-almost everywhere.

	 Let $O\subset \Omega$ be open and choose $(B_n)\subset \mathcal{B}$ such that $\lim \mathds{1}_{B_n}(x) =\mathds{1}_O(x)$ for all $x\in\Omega$.
	 Then
	 \[ (T(t)\mathds{1}_O)(x) = \lim_{n\to\infty} (T(t)\mathds{1}_{B_n})(x) 
	 = \lim_{n\to\infty} \int_{B_n} k_{t}(x,y) \dx\mu(y) = \int_O k_{t}(x,y) \dx\mu(y) \]
	 holds for all $x\in\Omega\setminus N$ by the choice of $N$ and the dominated convergence theorem.
	 
	Let $A\subset \Omega$ be a Borel set. Since $\mu$ is outer regular, there exists a sequence $O_n\subset \Omega$ of open sets 
	such that $\lim \mathds{1}_{O_n} = \mathds{1}_A$ almost everywhere. Thus, as in the previous argument,
	\[ (T(t)\mathds{1}_A)(x) = \int_\Omega k_{t}(x,y)\mathds{1}_A(y)\dx\mu(y) \quad (x\in \Omega\setminus N).\]
	Since every function of $B_b(\Omega)$ is the uniform limit of a sequence of simple functions, we conclude that
	\[ (T(t) f) (x) = \int_\Omega k_{t}(x,y) f(y) \dx\mu(y)\]
	holds for every $f\in B_b(\Omega)$ and every $x\in\Omega\setminus N$.
	
	By assertion (\ref{assertion2}) of Theorem~\ref{thm:ultrafeller}, for every $x\in \Omega$ there exists $k_{t,x}\in L^1(\Omega,\mu)$
	such that
		\[ (T(t)f)(x) = \int_\Omega k_{t,x} f\dx\mu \quad (f\in B_b(\Omega))).\]
	Thus, the assertion follows with $\tilde k_t(x,y) \coloneqq \mathds{1}_{\Omega\setminus N}(x) k_t(x,y) + \mathds{1}_N(x) k_{t,x}(y)$.
\end{proof}


\section{Main results}
\label{sec:convergence}

This section contains our main result, a version of Doob's theorem. 
We start with two auxiliary lemmas.

\begin{lemma}
	\label{lem:Tirreducible}
	Let $\mathcal{T}=(T(t))_{t\geq 0}$ be a positive strong Feller semigroup and $\mu$ a strictly positive, finite, $\mathcal{T}$-invariant
	Borel measure on $\Omega$.
	Let $e\in B_b(\Omega)$ be a fixed point of $\mathcal{T}$ such that $e(x)>0$ for all $x\in\Omega$.
	Then the extension of the semigroup $\mathcal{T}$ to $L^1(\Omega,\mu)$ is irreducible if and only if
	there is no open and closed set $\emptyset \neq A\subsetneq \Omega$ such that the closed ideal
	\[ \{ f\in B_b(\Omega) : f_{\mid \Omega \setminus A} =0 \} \]
	of $B_b(\Omega)$ is invariant under the action of $\mathcal{T}$.
\end{lemma}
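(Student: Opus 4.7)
The plan is to prove each direction by contrapositive. The ``clopen $A$ with invariant $J \Rightarrow$ not irreducible on $L^1$'' direction is essentially bookkeeping: given a clopen set $\emptyset \neq A \subsetneq \Omega$ with $\mathcal{T}$-invariant $J = \{f \in B_b(\Omega) : f|_{\Omega\setminus A} = 0\}$, I would introduce $\tilde J \coloneqq \{f \in L^1(\Omega,\mu) : f = 0 \text{ a.e.\ on } \Omega\setminus A\}$, which is a closed ideal that is non-trivial by strict positivity of $\mu$ on the non-empty open sets $A$ and $\Omega\setminus A$. To see that $\tilde J$ is invariant under the $L^1$-extension of $\mathcal{T}$, first handle $f \in L^\infty \cap \tilde J$ by picking the pointwise representative $f\mathds{1}_A \in J$ and invoking invariance of $J$; then pass to all of $\tilde J$ by truncation together with the $L^1$-contractivity from Remark~\ref{rem:extension}.

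For the harder direction, assume $\mathcal{T}$ admits a non-trivial closed invariant ideal $\tilde J \subset L^1(\Omega,\mu)$. By \cite[III \S1 Ex.2]{schaefer1974} there is measurable $B \subset \Omega$ with $0 < \mu(B) < \mu(\Omega)$ and $\tilde J = \{f : f = 0 \text{ a.e.\ on } B\}$; set $A \coloneqq \Omega \setminus B$. Since $e = T(t)e$ is continuous for any $t > 0$ by strong Feller, I would fix such a $t$ and consider the key pair
\[ v \coloneqq T(t)(e\mathds{1}_A), \qquad w \coloneqq T(t)(e\mathds{1}_B), \]
both continuous, non-negative, with $v + w = T(t)e = e > 0$ everywhere. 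Invariance of $\tilde J$ forces $v = 0$ a.e.\ on $B$, so $w = e$ a.e.\ on $B$; combined with $\mathcal{T}$-invariance of $\mu$ this gives
\[ \int_A w \dx\mu = \int_\Omega w \dx\mu - \int_B w \dx\mu = \int_B e \dx\mu - \int_B e \dx\mu = 0, \]
hence $w = 0$ a.e.\ on $A$ as well. Consequently the open set $\{v > 0\} \cap \{w > 0\}$ has $\mu$-measure zero and is therefore empty by strict positivity, so $A^* \coloneqq \{v > 0\}$ is a clopen subset of $\Omega$ with complement $\{w > 0\}$. Both parts are non-empty because $\int v \dx\mu = \int_A e \dx\mu > 0$ and analogously for $w$.

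It remains to verify that $J^* \coloneqq \{f \in B_b(\Omega) : f|_{\Omega\setminus A^*} = 0\}$ is $\mathcal{T}$-invariant. Since $\Omega \setminus A^* = \{w > 0\}$ coincides with $B$ up to a $\mu$-nullset, every $f \in J^*$ represents an element of $\tilde J$ as an $L^1$ class, so $T(s)f \in \tilde J$ and $T(s)f = 0$ $\mu$-a.e.\ on $\Omega \setminus A^*$ for all $s \geq 0$. For $s > 0$ the strong Feller property makes $T(s)f^\pm$ continuous and non-negative, and any non-negative continuous function vanishing $\mu$-a.e.\ on an open set vanishes pointwise there by strict positivity, so $T(s)f \in J^*$; the case $s = 0$ is trivial. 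The main obstacle will be discovering the decomposition $e = v + w$: the role of the strictly positive continuous fixed point is precisely to force the images of $e\mathds{1}_A$ and $e\mathds{1}_B$ to have \emph{disjoint} positivity sets, which is what converts the purely measure-theoretic splitting $\{A, B\}$ into a genuine clopen partition of $\Omega$.
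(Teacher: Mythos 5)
Your proof is correct and takes essentially the same route as the paper's: both apply $T(t)$ to $e\mathds{1}_M$ for the carrier set $M$ of the $L^1$-ideal, use invariance of $\mu$ together with the strong Feller property to conclude $T(t)(e\mathds{1}_M)=e\mathds{1}_M$ almost everywhere, and then upgrade this a.e.\ statement to a genuine clopen set via continuity and strict positivity of $\mu$. Your complementary pair $v,w$ with $v+w=e$ and your direct verification that the ideal attached to $A^*=\{v>0\}$ is invariant are a contrapositive repackaging of the paper's quotient $f/e$ being idempotent and its concluding contradiction step.
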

\begin{proof}
	Assume that there is no open and closed set $\emptyset \neq A \subsetneq \Omega$ such that
	\[ \{ f\in B_b(\Omega) : f_{\mid \Omega \setminus A} =0 \} \]
	is invariant under the action of $\mathcal{T}$.
	Let $J \subset L^1(\Omega,\mu)$ be a closed $\mathcal{T}$-invariant ideal.
	Then there exists a Borel set $M\subset \Omega$ such that
	\[ J = L^1(M,\mu) \coloneqq \{ f\in L^1(\Omega,\mu) : f_{\mid \Omega\setminus M} = 0 \text{ almost everywhere}\}. \]
	Fix $\tau>0$. Then $f\coloneqq T(\tau)( e\mathds{1}_M) \leq e$ is a positive function in $C_b(\Omega)$ with
	$f_{|\Omega\setminus M}=0$ almost everywhere, thus $f\leq e\mathds{1}_M$ almost everywhere.
	Since $\mu$ is $\mathcal{T}$-invariant, $\int_\Omega f \dx\mu = \int_\Omega e\mathds{1}_M \dx\mu$
	and hence $f=e\mathds{1}_M$ almost everywhere. Since $f$ and $e$ are continuous
	and since $\mu$ is strictly positive,
	we conclude from $({f}/{e})^2 = \mathds{1}_M = {f}/{e}$ almost everywhere that $(f/e)^2 = f/e$ holds pointwise.
	Hence $f/e=\mathds{1}_A$ for some open and closed set $A\subset \Omega$ with $\mu(A\vartriangle M)=0$.
	
	Assume that $\emptyset\neq A\neq \Omega$. By hypothesis, there exists $t>0$ such that
	$(T(t)\mathds{1}_A)(x)\neq 0$ for some $x\in \Omega\setminus A$.
	In particular, $\mathds{1}_A\in J$ and $T(t)\mathds{1}_A\not\in J$ since $T(t)\mathds{1}_A$ is continuous, $A$ is closed,
	and $\mu$ is strictly positive. This contradicts the choice of $M$.
	Thus $A=\emptyset$ or $A=\Omega$ and hence $L^1(M)=\{0\}$ or $L^1(M)=L^1(\Omega,\mu)$.

	The converse implication of the lemma is obvious.
\end{proof}

\begin{lemma}
	\label{lem:mustrictlypositive}
	Let $\Omega$ be second countable,
	$\mathcal{T}=(T(t))_{t\geq 0}$ a positive strong Feller semigroup, and $\mu\neq 0$ a positive, finite, $\mathcal{T}$-invariant
	Borel measure on $\Omega$.
	We assume that there is no closed set $\emptyset \neq A\subsetneq \Omega$ such that the closed ideal
	\[ \{ f\in B_b(\Omega) : f_{\mid A} =0 \} \]
	of $B_b(\Omega)$ is invariant under the action of $\mathcal{T}$. Then $\mu$ is strictly positive.
\end{lemma}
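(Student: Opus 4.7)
The plan is to argue by contradiction: assume $\mu$ is not strictly positive and produce a closed set $\emptyset \neq A \subsetneq \Omega$ violating the irreducibility hypothesis. The natural candidate for $A$ is the topological support of $\mu$.

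First I would construct $A$. Because $\Omega$ is second countable, it has a countable basis, and the union $V$ of all open $\mu$-nullsets can be written as a countable union of basic open $\mu$-nullsets; hence $\mu(V)=0$ by $\sigma$-additivity. Setting $A \coloneqq \Omega\setminus V$, the set $A$ is closed and $\mu(\Omega\setminus A)=0$. Since $\mu \neq 0$, we have $A \neq \emptyset$; since $\mu$ is assumed not to be strictly positive, $V$ is non-empty, so $A \neq \Omega$. By construction, $\mu(W)>0$ for every open set $W$ meeting $A$.

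Next I would verify that the ideal $J \coloneqq \{f \in B_b(\Omega) : f_{\mid A}=0\}$ is $\mathcal{T}$-invariant, which gives the desired contradiction to the hypothesis. Fix $f \in J$ and $t \geq 0$. For $t=0$ there is nothing to show. For $t>0$, observe that $f$ is supported in $\Omega\setminus A = V$, a $\mu$-nullset, so $f=0$ $\mu$-almost everywhere. Remark~\ref{rem:extension} provides a contractive extension of $T(t)$ to $L^1(\Omega,\mu)$, so $T(t)f = 0$ in $L^1(\Omega,\mu)$, i.e.\ $T(t)f$ vanishes $\mu$-a.e. Now use the strong Feller property: $T(t)f \in C_b(\Omega)$. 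If there were some $x_0 \in A$ with $(T(t)f)(x_0)\neq 0$, continuity would yield an open neighbourhood $W$ of $x_0$ on which $|T(t)f|$ is bounded below by a positive constant. But $W$ is open and meets $A$, so $\mu(W)>0$ by the construction of $A$, contradicting $T(t)f=0$ $\mu$-a.e. Hence $T(t)f$ vanishes on all of $A$, which shows $T(t)f \in J$.

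The main subtlety I anticipate is the construction of $A$ as the support of $\mu$, which requires second countability to ensure that the (potentially uncountable) union of open nullsets is itself null; without this one cannot guarantee $\mu(\Omega\setminus A)=0$ and hence cannot conclude that $f \in J$ vanishes $\mu$-a.e. Everything else—the passage from a.e.\ vanishing to pointwise vanishing on $A$—is a standard combination of the strong Feller continuity of $T(t)f$ with the defining property of the support.
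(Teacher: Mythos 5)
Your proof is correct and follows essentially the same route as the paper: both take $A$ to be the complement of the union $\mathcal{O}$ of all open $\mu$-nullsets (using second countability to get $\mu(\mathcal{O})=0$), and both combine $\mathcal{T}$-invariance of $\mu$ (so that $T(t)f=0$ a.e.\ when $f$ vanishes off $\mathcal{O}$) with strong Feller continuity and the fact that open sets meeting $A$ have positive measure. The only difference is presentational: the paper derives a contradiction from the non-invariance of the ideal supplied by the hypothesis, whereas you verify its invariance directly, which is just the contrapositive of the same computation.
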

\begin{proof}
	Let 
	\[ \mathcal{O} \coloneqq \bigcup_{\substack{V\subset\Omega \text{ open}\\\mu(V)=0}} V.\]
	Then $\mathcal{O}$ is open and $\mu(\mathcal{O})=0$ since $\Omega$ is second countable. Let
	\[ J \coloneqq \{ f\in B_b(\Omega) : f_{\mid \Omega\setminus\mathcal{O}} =0\}.\]
	Since $\mu(\Omega\setminus\mathcal{O})=\mu(\Omega)>0$, we have that $\Omega\setminus\mathcal{O}\neq \emptyset$ and $J\neq B_b(\Omega)$.

	Assume that $\mathcal{O}\neq \emptyset$. Then $J\neq \{0\}$ and,
	by assumption, there exists $0\leq f\in J$ and $t>0$ such that $T(t)f\not\in J$, i.e., there exists $x\in\Omega\setminus\mathcal{O}$
	with $(T(t)f)(x)>0$. Hence we can choose an $\eps>0$ and an open neighborhood $U$ of $x$ such that
	$(T(t)f)(y)\geq \eps$ for all $y\in U$. Since
	\[ 0 = \int_\Omega f\dx \mu = \int_\Omega T(t)f \dx\mu \geq \eps \mu(U),\]
	the open set $U$ is a nullset and therefore $U\subset\mathcal{O}$. This contradicts the choice of $x$.
	Hence $\mathcal{O} = \emptyset$, which proves the claim.
\end{proof}

In the following proposition we formulate two sufficient conditions for the extension of the semigroup to $L^1$ to be irreducible.

\begin{prop}
	\label{prop:TL1irred}
	Let $\Omega$ be second countable, $\mathcal{T}=(T(t))_{t\geq 0}$ a positive strong Feller semigroup, and $\mu\neq 0$ a positive, finite,
	and $\mathcal{T}$-invariant Borel measure on $\Omega$. Consider the following assertions.
	\begin{enumerate}[(i)]
		\item\label{item:mensairred} The semigroup is bounded and
		there is no closed set $\emptyset \neq A\subsetneq \Omega$ such that the closed ideal
	$\{ f\in B_b(\Omega) : f_{\mid A} =0 \}$ of $B_b(\Omega)$ is invariant under the action of $\mathcal{T}$.
		\item\label{item:fixedpoint} There exists $e\in\fix(\mathcal{T})$ such that $e(x)>0$ for all $x\in \Omega$, $\Omega$ is
		connected, and $\mu$ is strictly positive.
		\item\label{item:Lpirred} The extension of the semigroup to $L^1(\Omega,\mu)$ is irreducible and 
		$\mu$ is strictly positive.
	\end{enumerate}
	Then (\ref{item:mensairred}) $\Rightarrow$ (\ref{item:Lpirred}) and (\ref{item:fixedpoint}) $\Rightarrow$ (\ref{item:Lpirred}).
\end{prop}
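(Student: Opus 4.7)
The plan is to handle the two implications separately, using the preceding lemmas to reduce each to a straightforward topological check.

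For (\ref{item:fixedpoint}) $\Rightarrow$ (\ref{item:Lpirred}), $\mu$ is already strictly positive by hypothesis, so only the irreducibility of the $L^1$-extension needs to be established. Since (\ref{item:fixedpoint}) provides a strictly positive fixed point $e\in\fix(\mathcal{T})$, Lemma~\ref{lem:Tirreducible} applies and identifies this irreducibility with the nonexistence of an open-and-closed set $\emptyset\neq A\subsetneq\Omega$ whose associated closed ideal is $\mathcal{T}$-invariant. Connectedness of $\Omega$ leaves only $A=\emptyset$ and $A=\Omega$ as clopen subsets, so the condition is vacuously satisfied.

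For (\ref{item:mensairred}) $\Rightarrow$ (\ref{item:Lpirred}), strict positivity of $\mu$ follows at once from Lemma~\ref{lem:mustrictlypositive}, whose hypotheses coincide with the closed-ideal condition in (\ref{item:mensairred}). For irreducibility of the $L^1$-extension, I would take a closed $\mathcal{T}$-invariant ideal $J\subset L^1(\Omega,\mu)$, writing it as $J=\{f\in L^1:f=0\text{ a.e.\ on }\Omega\setminus M\}$ for some Borel $M\subset\Omega$. The strategy is to build from $M$ a closed set $A\subset\Omega$ such that the $B_b$-ideal $\{f\in B_b(\Omega):f_{\mid A}=0\}$ is $\mathcal{T}$-invariant, so that hypothesis (\ref{item:mensairred}) forces $A\in\{\emptyset,\Omega\}$. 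The right choice is
\[ \mathcal{U}:=\bigcup\{V\subset\Omega:V\text{ open},\;\mu(V\setminus M)=0\},\qquad A:=\Omega\setminus\mathcal{U}, \]
which by second countability still satisfies $\mu(\mathcal{U}\setminus M)=0$. Any $f\in B_b(\Omega)$ vanishing on $A$ is supported in $\mathcal{U}$ and hence equals $f\mathds{1}_M$ almost everywhere, so $f\in J$; invariance of $J$ then gives $T(t)f=0$ almost everywhere on $\Omega\setminus M$, and because the strong Feller property makes $T(t)f$ continuous, the open set $\{T(t)f\neq 0\}$ has $\mu$-null intersection with $\Omega\setminus M$ and must be contained in $\mathcal{U}$, proving $T(t)f_{\mid A}=0$.

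With $A\in\{\emptyset,\Omega\}$ thus forced, the case $A=\emptyset$ yields $\mu(\Omega\setminus M)=0$ and hence $J=L^1(\Omega,\mu)$. The case $A=\Omega$, i.e.\ $\mathcal{U}=\emptyset$, I would rule out by running the same open-set argument on $g:=T(t)\mathds{1}_M\in C_b(\Omega)$: since $\mathds{1}_M\in J$ and $J$ is $\mathcal{T}$-invariant, $g$ vanishes almost everywhere on $\Omega\setminus M$, so $\{g>0\}\subset\mathcal{U}=\emptyset$ and $g\equiv 0$; this contradicts $\int_\Omega g\dx\mu=\mu(M)$ unless $\mu(M)=0$, in which case $J=\{0\}$. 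The main subtlety is precisely the upgrade from almost-everywhere to pointwise vanishing on the closed set $A$: strict positivity of $\mu$, continuity supplied by the strong Feller property, and the maximality built into the definition of $\mathcal{U}$ together provide the leverage needed, and coordinating these three ingredients is what I expect to be the principal obstacle.
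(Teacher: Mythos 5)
Your proof is correct. The implication (\ref{item:fixedpoint}) $\Rightarrow$ (\ref{item:Lpirred}) is handled exactly as in the paper, via Lemma~\ref{lem:Tirreducible} plus connectedness. For (\ref{item:mensairred}) $\Rightarrow$ (\ref{item:Lpirred}) you follow the same skeleton as the paper --- obtain strict positivity from Lemma~\ref{lem:mustrictlypositive}, represent a closed $\mathcal{T}$-invariant ideal $J\subset L^1(\Omega,\mu)$ by a Borel set, manufacture from it a closed set $A\subset\Omega$ whose associated $B_b$-ideal is $\mathcal{T}$-invariant, and invoke hypothesis (\ref{item:mensairred}) --- but your construction of $A$ is genuinely different. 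The paper takes $A=\bigcap_{t>0}\{T(t)\mathds{1}_B=0\}$ and needs the uniform bound $M=\sup_{t\ge0}\Vert T(t)\Vert$ together with the monotonicity $A_{t+s}\supset A_t$ to verify invariance of the resulting ideal; you instead take $A$ to be the complement of the union $\mathcal{U}$ of all open sets $V$ with $\mu(V\setminus M)=0$, and invariance falls out of the maximality of $\mathcal{U}$ combined with the continuity of $T(t)f$ and the compatibility of $T(t)$ with its $L^1$-extension (Remark~\ref{rem:extension}). A pleasant by-product is that your argument never uses boundedness of the semigroup, so you in fact prove (\ref{item:mensairred}) $\Rightarrow$ (\ref{item:Lpirred}) with the boundedness assumption deleted; the price is a small extra step in the case $A=\Omega$ (the computation with $g=T(t)\mathds{1}_M$), which the paper's choice of $A$ avoids. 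One cosmetic remark: strict positivity of $\mu$, which you list among the three ingredients needed for the pointwise upgrade, is not actually used there --- only second countability, continuity, and the maximality of $\mathcal{U}$ enter.
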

\begin{proof}
	(\ref{item:mensairred}) $\Rightarrow$ (\ref{item:Lpirred}): First note that $\mu$ is strictly positive by Lemma~\ref{lem:mustrictlypositive}.
	Let $J \subset L^1(\Omega,\mu)$ be a closed $\mathcal{T}$-invariant ideal.
	Then there exists a Borel set $B\subset \Omega$ such that
	\[ J = L^1(B,\mu) \coloneqq \{ f\in L^1(\Omega,\mu) : f_{\mid \Omega\setminus B} = 0 \text{ almost everywhere}\}. \]
	For $t>0$ let $A_t \coloneqq \{ T(t)\mathds{1}_B = 0 \}$, which is a closed subset of $\Omega$.
	Denote by $M\coloneqq \sup_{t\geq 0} \Vert T(t)\Vert$ the bound of $\mathcal{T}$. Then
	$T(s)\mathds{1}_B \leq M \mathds{1}_B$ $\mu$-almost everywhere and hence $(T(t+s)\mathds{1}_B)(x) \leq M (T(t)\mathds{1}_B)(x)$ for every
	$x\in \Omega$. This shows that $A_{t+s} \supset A_t$ for all $t,\, s >0$.

	Let $A\coloneqq \cap_{t>0} A_t$, which is also a closed subset of $\Omega$.
	Since $\Omega\setminus A_t$ is non-increasing w.r.t.\ $t$, we obtain that
	\[ \Omega\setminus A = \bigcup_{t>0} (\Omega\setminus A_t) = \bigcup_{n\in\N} (\Omega\setminus A_\frac{1}{n}).\]
	Hence, $\mu((\Omega\setminus A)\setminus B) = \lim_{n\to\infty} \mu((\Omega\setminus A_\frac{1}{n})\setminus B) = 0$
	and consequently $\mathds{1}_{\Omega\setminus A} \leq \mathds{1}_B$ $\mu$-almost everywhere.
	Thus,
	\[ T(t)\mathds{1}_{\Omega\setminus A} \leq T(t)\mathds{1}_B \leq M \mathds{1}_{\Omega\setminus A_t} 
	\leq M \mathds{1}_{\Omega\setminus A} \quad (t>0),\]
	which shows that the closed ideal $\{ f\in B_b(\Omega) : f_{\mid A} = 0\}$ is $\mathcal{T}$-invariant.
	Now assumption (\ref{item:mensairred}) implies that $A=\Omega$ or $A=\emptyset$.
	In the first case, $A_t = \Omega$ for all $t>0$, hence
	\[ \mu(B) = \int_\Omega T(t) \mathds{1}_B \dx\mu = 0\]
	and $J=\{0\}$.  In the second case we conclude from $\mu((\Omega\setminus A)\setminus B)=0$ 
	that $\mu(B)=\mu(\Omega)$ and $J = L^1(\Omega,\mu)$.

	(\ref{item:fixedpoint}) $\Rightarrow$ (\ref{item:Lpirred}): This follows from Lemma~\ref{lem:Tirreducible}.
\end{proof}

The following lemma is well-known and a consequence of Proposition~3.5 and Theorem~3.8 in~\cite{nagel1986}.
Still, we include its short proof in order to be more self-contained.
\begin{lemma}
	\label{lem:fixedspaceL1}
	Let $\mu$ be a positive Borel measure on $\Omega$ and 
	$\mathcal{T}=(T(t))_{t\geq 0}$ a positive and irreducible 
	semigroup on $L^1(\Omega,\mu)$ such that $\mathds{1}\in\fix(\mathcal{T}')$ and $\fix(\mathcal{T})\neq \{0\}$.
	Then there exists $e\in\fix(\mathcal{T})$ with $e(x)>0$ almost everywhere and
	$\fix(\mathcal{T})=\lin\{e\}$.
\end{lemma}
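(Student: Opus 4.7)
The plan combines positivity, the invariance $\int T(t)f\dx\mu = \int f\dx\mu$ implied by $\mathds{1}\in\fix(\mathcal{T}')$, and the irreducibility hypothesis in three steps: first show that $\fix(\mathcal{T})$ is a vector sublattice of $L^1(\Omega,\mu)$, then extract a positive fixed point that is strictly positive almost everywhere via irreducibility, and finally use both facts to rule out any second linearly independent fixed point.

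For the sublattice property, let $f\in\fix(\mathcal{T})$ be real-valued. Positivity together with $f^+\geq f$ and $f^+\geq 0$ gives $T(t)f^+\geq T(t)f = f$ and $T(t)f^+\geq 0$, hence $T(t)f^+\geq f^+$. Integrating against $\mu$ and using $T(t)'\mathds{1}=\mathds{1}$ yields $\int T(t)f^+\dx\mu = \int f^+\dx\mu$, so $T(t)f^+=f^+$ almost everywhere; the same argument gives $f^-,|f|\in\fix(\mathcal{T})$. Starting from an arbitrary non-zero $f\in\fix(\mathcal{T})$, by first passing to its real or imaginary part and then to the positive or negative part, I thereby produce a non-zero element $0\leq e\in\fix(\mathcal{T})$.

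To show $e>0$ almost everywhere, I consider the principal order ideal $I_e\coloneqq\{f\in L^1(\Omega,\mu): |f|\leq\lambda e\text{ for some }\lambda\geq 0\}$, which is $\mathcal{T}$-invariant because $|T(t)f|\leq T(t)|f|\leq\lambda T(t)e = \lambda e$. Its $L^1$-closure is therefore a closed $\mathcal{T}$-invariant ideal containing $e$, hence equal to $L^1(\Omega,\mu)$ by irreducibility. A truncation argument, namely $f\wedge ne\uparrow f$ in $L^1$ whenever $0\leq f\in L^1(\Omega,\mu)$ vanishes on $\{e=0\}$, identifies this closure with $\{f\in L^1(\Omega,\mu): f=0\text{ almost everywhere on }\{e=0\}\}$, so that $\mu(\{e=0\})=0$ follows.

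For one-dimensionality I first show that any non-zero positive $e'\in\fix(\mathcal{T})$ is a scalar multiple of $e$. Such an $e'$ is strictly positive almost everywhere by the second step, so setting $\lambda\coloneqq (\int e\dx\mu)/(\int e'\dx\mu)$ with both integrals finite and strictly positive, the difference $g\coloneqq e-\lambda e'\in\fix(\mathcal{T})$ satisfies $\int g\dx\mu=0$. The first step gives $g^\pm\in\fix(\mathcal{T})$, and the second applied to $g^+$ forces $g^+=0$ or $g^+>0$ almost everywhere; the latter would give $\int g\dx\mu>0$, a contradiction, so $g^+=0$ and similarly $g^-=0$, giving $e'=\lambda^{-1}e$. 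For a general real $f\in\fix(\mathcal{T})$, each of $f^\pm\in\fix(\mathcal{T})$ is either zero or a positive multiple of $e$, so $f\in\mathds{R}\, e$; decomposing a complex fixed point into real and imaginary parts then yields $\fix(\mathcal{T})=\lin\{e\}$. The genuinely content-bearing step is the second, where the abstract irreducibility condition must be converted into the pointwise statement $e>0$ almost everywhere via a simultaneously order-theoretic and measure-theoretic description of the closure of the principal ideal generated by $e$; the remaining arguments are iterated applications of the same sublattice--irreducibility dichotomy.
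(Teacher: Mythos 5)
Your proof is correct and follows essentially the same route as the paper: the sublattice property of $\fix(\mathcal{T})$ obtained from positivity together with the invariant functional $\mathds{1}\in\fix(\mathcal{T}')$, and the conversion of irreducibility into the dichotomy that a positive fixed point is either zero or strictly positive almost everywhere, via the closure of the principal ideal it generates. The only minor variation is the final one-dimensionality step, where you normalize by the invariant integral and apply the dichotomy to the difference of two positive fixed points, whereas the paper notes that $\fix(\mathcal{T})$ is totally ordered and takes $\lambda=\inf\{c\in\mathds{R}: h\leq ce\}$; both are routine once the two preceding facts are in place.
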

\begin{proof}
Let $h\in\fix(\mathcal{T})$ and $\tau \geq 0$. 
Since $\vert h\vert = \vert T(\tau)h \vert \leq T(\tau)\vert h \vert$
and \[ \int_\Omega (T(\tau)\vert h\vert - \vert h\vert )\dx\mu = \applied{T(\tau)\vert h\vert - \vert h\vert}{\mathds{1}} = 0,\]
we obtain that $\vert h\vert \in \fix(\mathcal{T})$. Thus $\fix(\mathcal{T})$ is a sublattice.
Since for every $e \in \fix(\mathcal{T})$ the closure of the principal ideal 
\[	\{ f\in L^1(\Omega,\mu) : \vert f\vert \leq c e^+ \text{ for some } c>0\} \]
is closed and $\mathcal{T}$-invariant, $e^+(x)=0$ for almost every $x \in \Omega$
or $e^+(x)>0$ for almost every $x \in \Omega$. Thus elements of $\fix(\mathcal{T})$ do not
change sign, which shows that $\fix(\mathcal{T})$ is totally ordered.
Let $0\leq e\in\fix(\mathcal{T})$ with $e\neq 0$. For every $h\in\fix(\mathcal{T})$ 
there exist $c_1,\,c_2 \in \mathds{R}$ such that $c_1 e \leq h \leq c_2 e$.
For
\[ \lambda \coloneqq \inf\{c\in\mathds{R} : h\leq ce\}\in \mathds{R} \]
we obtain $h\leq \lambda e$. Since also $\lambda = \sup\{c \in \mathds{R} : ce \leq h \}$, we have $h = \lambda e$.
\end{proof}


Our main result is based on the following remarkable theorem due to Greiner, see also
a more recent article by Arendt~\cite[Theorem~4.2]{arendt2008}.

\begin{theorem}[\mbox{\cite[Kor.~3.11]{greiner1982}}]
\label{thm:greiner}
	Let $\mu$ be a positive Borel measure on $\Omega$, $1\leq p <\infty$, and $\mathcal{T}=(T(t))_{t\geq 0}$ a positive
	contraction semigroup on $L^p(\Omega,\mu)$ such that
	$T(t_0)$ is a kernel operator for some $t_0>0$. 
	Let $e\in L^p(\Omega,\mu)$ be a fixed point of $\mathcal{T}$ such that $e(x)>0$ almost everywhere.
	Then $\lim_{t\to\infty} T(t)f$ exists in the norm of $L^p(\Omega,\mu)$ for all $f\in L^p(\Omega,\mu)$.
\end{theorem}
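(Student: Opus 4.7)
The plan is to deduce convergence from the Jacobs--de Leeuw--Glicksberg (JdLG) splitting theorem for bounded semigroups, together with an analysis of the peripheral point spectrum that uses the kernel operator hypothesis. The target is a decomposition $L^p(\Omega,\mu) = \fix(\mathcal{T}) \oplus X_s$ under which $T(t)$ acts trivially on the first summand and converges strongly to $0$ on the second; given such a splitting, every $f \in L^p(\Omega,\mu)$ writes as $f = f_r + f_s$ and $T(t)f \to f_r$ in norm.

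First I would establish that for every $f \in L^p(\Omega,\mu)$ the orbit $\{T(t)f : t \geq 0\}$ is relatively norm-compact. Using the semigroup identity $T(t) = T(t-t_0)T(t_0)$ for $t \geq t_0$, this reduces to a compactness-type property of $T(t_0)$ that is strong enough to upgrade the (automatic, for $1<p<\infty$) weak compactness of bounded sets to norm compactness. The key input is that positive kernel operators on $L^p$ are AM-compact, i.e. they map order intervals into relatively norm-compact sets (Bukhvalov's characterization); for $p=1$ one replaces reflexivity by a uniform integrability argument, again supplied by the kernel structure. The strictly positive fixed vector $e$ supplies the needed order control, since for $f$ in the norm-dense principal ideal generated by $e$ one has $|T(t)f| \leq T(t)|f| \leq c\,T(t)e = ce$, so the orbit stays in $[-ce, ce]$ and $T(t_0)$ maps this interval into a relatively norm-compact set.

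With relatively norm-compact orbits of a bounded semigroup, JdLG produces the decomposition $L^p(\Omega,\mu) = X_r \oplus X_s$, where $X_r$ is the closed linear span of all unimodular eigenvectors of $\mathcal{T}$ and $T(t)$ acts on $X_r$ as a strongly continuous group of isometries, while on $X_s$ the orbit has $0$ as a weak cluster point. Norm compactness of orbits upgrades this to $T(t)f \to 0$ in norm for every $f \in X_s$. It remains to prove $X_r = \fix(\mathcal{T})$. Suppose $T(t)u = e^{i\alpha t}u$ for some $\alpha \in \mathds{R}$ and $u \neq 0$. Positivity gives $|u| = |T(t)u| \leq T(t)|u|$, and testing with the positive fixed functional supplied by $e$ together with the $L^p$-contractivity forces $T(t)|u| = |u|$, so $|u| \in \fix(\mathcal{T})$. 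Writing $u = h|u|$ with $|h|=1$ on $\{|u|>0\}$ and inserting the eigenvalue equation into the kernel representation of $T(t_0)$ yields a rigidity statement on $h$: the phase must be essentially constant along the fibres $y \mapsto k_{t_0}(x,y)$ for almost every $x$. The strictly positive fixed point $e$ ensures that this fibre structure is essentially connected across $\Omega$, so $h$ must be globally a unimodular constant, which forces $\alpha = 0$.

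The main obstacle is this last rigidity step: ruling out non-trivial unimodular eigenvalues by combining the kernel representation of $T(t_0)$ with the eigenvector equation. Here the kernel hypothesis is essential, since group rotations show that without it arbitrary peripheral spectrum can occur. The standard way to make the phase argument rigorous is to track equality in the pointwise Kato-type inequality $|T(t_0)u|(x) \leq \int |k_{t_0}(x,y)|\,|u(y)|\,\dx\mu(y)$; unimodularity of the eigenvalue together with $T(t_0)|u|=|u|$ forces equality $\mu$-a.e., which in turn forces constancy of the phase of $u$ on the support of $k_{t_0}(x,\cdot)$. Strict positivity of $e$ and the irreducible-like reach of the kernel make this constancy global, eliminating $\alpha \neq 0$. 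Once $X_r = \fix(\mathcal{T})$ is established, the theorem follows from the JdLG splitting and the compactness argument above.
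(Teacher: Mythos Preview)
The paper does not give its own proof of this theorem: it is quoted as Greiner's result \cite[Kor.~3.11]{greiner1982}, with a pointer to \cite[Theorem~4.2]{arendt2008} for a more recent treatment, and is then used as a black box in the proof of Theorem~\ref{thm:pointwiseconvergence}. So there is no in-paper argument against which to compare your proposal.

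Judged on its own, your JdLG-plus-AM-compactness outline is the standard route to such results, and the compactness part (kernel operators map order intervals to relatively compact sets, orbits of elements of the principal ideal generated by $e$ stay in an order interval) is essentially correct. The genuine gap is in the peripheral-spectrum step. Your assertion that ``the strictly positive fixed point $e$ ensures that this fibre structure is essentially connected across $\Omega$'' is false: take $\Omega=\Omega_1\cup\Omega_2$ disjoint and $\mathcal{T}=\mathcal{T}_1\oplus\mathcal{T}_2$ a direct sum of two kernel semigroups, each with a strictly positive fixed vector $e_i$; then $e=e_1\oplus e_2$ is strictly positive on all of $\Omega$, yet the kernel of $T(t_0)$ does not link $\Omega_1$ and $\Omega_2$, and the phase $h$ of a unimodular eigenvector can differ on the two pieces. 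What you actually need is the weaker conclusion $u\in\fix(\mathcal{T})$, which has to be obtained band-wise (on the closed ideal generated by $|u|$, or on irreducible components), not via global constancy of $h$.

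Even on a single irreducible piece your phase-rigidity argument is incomplete. The equality case in the pointwise triangle inequality only gives $h(y)=e^{i\alpha t_0}h(x)$ for $k_{t_0}(x,\cdot)|u|$-a.e.\ $y$; this is perfectly compatible with a non-trivial cyclic structure of the kernel and by itself yields at most $e^{i\alpha t_0}=1$, not $\alpha=0$. To eliminate all $\alpha\neq 0$ you must exploit the full one-parameter semigroup: since kernel operators form an order ideal among the regular operators, $T(t)=T(t-t_0)T(t_0)$ is again a kernel operator for every $t\ge t_0$, and running the same rigidity argument for every such $t$ forces $e^{i\alpha t}=1$ on an interval of $t$'s, hence $\alpha=0$. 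Without this step the argument does not close.
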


We are now in the position to prove our version of Doob's theorem.

\begin{theorem}
	\label{thm:pointwiseconvergence}
	Let $\Omega$ be second countable, let
	$\mathcal{T}=(T(t))_{t\geq 0}$ be a positive and bounded strong Feller semigroup on $B_b(\Omega)$
	such that $T'(t)\delta_x$ is a Borel measure for all $x\in\Omega$ and $t>0$.
	Let $\mu$ be a positive, finite, $\mathcal{T}$-invariant Borel measure on $\Omega$ and
	assume that one of the three assertions of Proposition~\ref{prop:TL1irred} holds.
	If 
	\begin{align}
	\lim_{t\to 0} \int_\Omega \vert T(t)f - f\vert \dx\mu = 0 \quad (f\in B_b(\Omega)), \label{eqn:L1continuous}\end{align}
	then there exists $e\in \fix(\mathcal{T})$ such that $e(x)>0$ $\mu$-almost everywhere, 
	$\fix(\mathcal{T})=\lin\{e\}$, and 
	\[ \lim_{t\to\infty} T(t)f = \int_\Omega f \dx\mu \; e\]
	for all $f\in B_b(\Omega)$ uniformly on compact subsets of $\Omega$.
\end{theorem}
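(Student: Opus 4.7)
The plan is to translate the problem into $L^1(\Omega,\mu)$, invoke Greiner's Theorem~\ref{thm:greiner} there, and then lift the resulting norm convergence back to pointwise convergence on $\Omega$ using the strong Feller property. First I extend $\mathcal{T}$ to $L^1(\Omega,\mu)$: by Remark~\ref{rem:extension} each $T(t)$ induces a positive contraction on $L^1$, and hypothesis~\eqref{eqn:L1continuous} together with the semigroup law upgrades the extension to a $C_0$-semigroup. Because $T'(t)\delta_x$ is a measure for every $x$ and $t$, the implication (\ref{assertion2})~$\Rightarrow$~(\ref{assertionkernel}) of Theorem~\ref{thm:ultrafeller} shows that each $T(t)$ on $B_b(\Omega)$ is a kernel operator, and Remark~\ref{rem:L1kernels} carries this property over to the $L^1$-extension. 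Proposition~\ref{prop:TL1irred} then guarantees that the $L^1$-extension is irreducible and that $\mu$ is strictly positive.

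To apply Greiner's theorem I still need a strictly positive $\mathcal{T}$-fixed point $e\in L^1$. In situation~(ii) of Proposition~\ref{prop:TL1irred} such an $e$ is explicitly given and lies in $B_b(\Omega)$; in the remaining situations I would produce one by taking a weak-$\ast$ cluster point in $L^\infty(\Omega,\mu)$ of the Cesàro means $\frac{1}{r}\int_0^r T(t)\mathds{1}_\Omega\dx t$, which is a non-zero (since $\mu$ is finite and invariant) $\mathcal{T}$-fixed element of $L^\infty\subset L^1$. Because $\mu$-invariance amounts to $\mathds{1}\in\fix(\mathcal{T}')$, Lemma~\ref{lem:fixedspaceL1} then yields $\fix(\mathcal{T})=\lin\{e\}$ with $e>0$ almost everywhere, and after normalizing so that $\int_\Omega e\dx\mu=1$ and replacing $e$ by the representative $T(s)e\in C_b(\Omega)$ I obtain a genuine fixed point of $\mathcal{T}$ on $B_b(\Omega)$. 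Greiner's Theorem~\ref{thm:greiner} now yields $\lim_{t\to\infty}T(t)f$ in $L^1$-norm for every $f\in L^1$, and invariance of $\mu$ forces the limit to be $\int_\Omega f\dx\mu\cdot e$.

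It remains to promote $L^1$-convergence to pointwise convergence on all of $\Omega$, uniformly on compact subsets. For $f\in B_b(\Omega)$ and a fixed $s>0$ I write $T(t)f=T(s)\,T(t-s)f$ for $t>s$: the family $(T(t-s)f)_{t>s}$ is uniformly bounded by $\sup_{\tau\geq 0}\|T(\tau)\|\cdot\|f\|_\infty$ and converges to $\int_\Omega f\dx\mu\cdot e$ in $L^1$, hence in $\mu$-measure, so assertion~(\ref{assertion1}) of Theorem~\ref{thm:ultrafeller} delivers $T(t)f(x)\to \int_\Omega f\dx\mu\cdot T(s)e(x)=\int_\Omega f\dx\mu\cdot e(x)$ for every $x\in\Omega$. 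Remark~\ref{rem:equicontorbits} provides the equicontinuity of $\{T(t)f:t\geq t_0\}$ for sufficiently large $t_0$, so that pointwise convergence is automatically uniform on compact subsets of $\Omega$. The main obstacle I anticipate is the construction of the strictly positive $L^1$-fixed point $e$ under only the $L^1$-irreducibility assumption; once $e$ is in hand, Greiner's theorem and the equivalences of Theorem~\ref{thm:ultrafeller} reduce the rest to essentially bookkeeping.
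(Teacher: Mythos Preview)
Your proposal is correct and follows the paper's strategy: extend to $L^1$, produce a strictly positive fixed point, apply Greiner's Theorem~\ref{thm:greiner}, and lift back via the equicontinuity supplied by Theorem~\ref{thm:ultrafeller}. The two tactical differences are worth noting. For the fixed point the paper does not use your weak-$\ast$ Ces\`aro construction (which does work: weak-$\ast$ convergence in $L^\infty$ implies weak $L^1$-convergence since $\mu$ is finite, $T_1(s)$ is weakly continuous, and $\int A_r\,d\mu=\mu(\Omega)$ forces the limit to be nonzero); instead it passes to $L^2$, observes that the interpolated semigroup $\mathcal{T}_2$ is a bounded $C_0$-semigroup on a Hilbert space and hence mean ergodic, and uses $\mathds{1}\in\fix(\mathcal{T}_2')$ together with the mean ergodic theorem to obtain $\fix(\mathcal{T}_2)\neq\{0\}$---this resolves precisely the obstacle you flagged, and more cleanly. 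For the lifting step the paper goes the other way: it first extracts, via Ascoli--Arzel\`a, a subsequential limit $g\in C_b(\Omega)$ of $(T(t_n)f)$ uniformly on compacta, then identifies $g$ $\mu$-a.e.\ with the $L^1$-limit $\int f\,d\mu\cdot e$ and concludes by uniqueness of $g$; your route through assertion~(\ref{assertion1}) of Theorem~\ref{thm:ultrafeller}, applying $T(s)$ to the bounded, in-measure-null sequence $T(t-s)f-\int f\,d\mu\cdot e$, is a little more direct and avoids the subsequence extraction. Both variants are valid.
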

\begin{proof}
	Let $f\in B_b(\Omega)$ and $(t_n)\subset[0,\infty)$ be a sequence with $\lim t_n=\infty$.
	Then $\{ T(t_n)f : n\in\N \}$ is bounded in norm by assumption 
	and equicontinuous by Theorem~\ref{thm:ultrafeller} and Remark~\ref{rem:equicontorbits}.
	Therefore, $T(t_n)f$ converges to some $g\in C_b(\Omega)$ uniformly on compact subsets 
	of $\Omega$ after passing to a subsequence according to the Ascoli-Arzel\`a theorem, see e.g. \cite[Thm IV.19.1c, p. 219]{benedetto2002}.
	
	By Remark~\ref{rem:extension}, the semigroup $\mathcal{T}$ induces a bounded semigroup $\mathcal{T}_p = (T_p(t))_{t\geq 0}$ on 
	$L^p(\Omega,\mu)$ for all $1\leq p<\infty$ where the semigroup $\mathcal{T}_1$ is contractive.
	Moreover, assumption \eqref{eqn:L1continuous} implies that
	$\mathcal{T}_1$ is strongly continuous and we conclude from
	\[ \Vert T(t)f - f\Vert_2 \leq \sqrt{\Vert T(t)f - f\Vert_1}
	\sqrt{\Vert T(t)f-f\Vert_\infty} \quad (f\in L^\infty(\Omega,\mu)),\, t\geq 0),\]
	that so is $\mathcal{T}_2$.
	Hence, $\mathcal{T}_2$ is mean ergodic by \cite[Ex V.4.7]{nagel2000}. Since $\mathds{1}\in\fix(\mathcal{T}_2')$,
	it follows from the mean ergodic theorem \cite[Thm V.4.5]{nagel2000} that $\fix(\mathcal{T}_2)\neq \{0\}$.

	By Lemma~\ref{lem:fixedspaceL1}, there exists $e\in \fix(\mathcal{T}_1)$ with $e(x)>0$ $\mu$-almost everywhere 
	and $\fix(\mathcal{T}_1)=\lin\{e\}$. We may assume that $\Vert e \Vert_{L^1(\Omega,\mu)}=1$.
	Moreover, $\mathcal{T}_1$ consists of kernel operators by Theorem~\ref{thm:ultrafeller} and Remark~\ref{rem:L1kernels}.
	Thus, $\lim_{t\to\infty} T_1(t)h$
	exists for every $h \in L^1(\Omega,\mu)$ by Theorem~\ref{thm:greiner} and we obtain that
	$L^1(\Omega,\mu) = \fix(\mathcal{T}_1) \oplus R$ for
	\[ R \coloneqq \overline \lin \{ h-T_1(t)h : h\in L^1(\Omega,\mu),\, t\geq 0\} \]
	from results of ergodic theory, see~\cite[V.4.4]{nagel2000}.
	Hence the limit in the norm of $L^1(\Omega,\mu)$ is
	\begin{align}
	\lim_{t\to\infty} T_1(t)f = \int_\Omega f \dx\mu \; e.  \label{eqn:greiner}
	\end{align}
	Therefore, the function $g$ equals the right hand side of \eqref{eqn:greiner} almost everywhere whenever $f \in B_b(\Omega)$.
	This implies that $g$ does not depend on the choice of the sequence $(t_n)$ and hence
	\[ \lim_{t\to\infty} T(t)f = \lim_{n\to\infty} T(t_n)f = g \]
	for all $f\in B_b(\Omega)$ uniformly on compact subsets of $\Omega$.
\end{proof}

\begin{rem}
Let us comment on the strong continuity on $L^1(\Omega,\mu)$.
If $\Omega$ is separable and metrizable and if $\mu$ is a $\mathcal{T}$-invariant finite positive measure
for a semigroup $\mathcal{T}$, then by~\cite[Thm 7.1.7]{bogachev2007} the measure $\mu$ is inner and hence outer regular,
and hence by Urysohn's lemma $C_b(\Omega)$ is dense in $L^1(\Omega,\mu)$.
In this case, if $\lim_{t\to 0} T(t)f = f$ in the norm of $L^1(\Omega,\mu)$ for all $f\in C_b(\Omega)$,
then the extension of $\mathcal{T}$ to $L^1(\Omega,\mu)$ is strongly continuous.
This is in particular the case if $\mathcal{T}$ is stochastically continuous, i.e., if
$\lim_{t\to 0} (T(t)f)(x)=f(x)$ for all $f\in C_b(\Omega)$ and $x\in\Omega$.
So in particular for stochastically continuous Markovian semigroups
on Polish spaces, which is the most common framework for Doob's theorem, our continuity assumption is satisfied.
\end{rem}

\begin{rem}
	The assumption that $\mu$ be finite cannot be dropped in Theorem~\ref{thm:pointwiseconvergence}.
	In fact, if we let $\mathcal{T}$ be the heat semigroup on the real line, simple calculations show that
	for all sufficiently large real numbers $a \ge 1$ the function
	\[
		f_a \coloneqq \mathds{1}_{\bigcup_{n=1}^\infty [a^n,2a^n]} \in B_b(\Omega)
	\]
	has the property that
	$\liminf_{t \to \infty} (T(t)f_a)(0) < \limsup_{t \to \infty} (T(t)f_a)(0)$.
\end{rem}


Seidler~\cite[Prop~2.5]{seidler1997} and Stettner~\cite[Thm~1]{stettner1994}
proved that the convergence in Doob's theorem can be strengthened
to strong convergence of the adjoint semigroup with respect to the total variation norm.
This is still true under the assumptions of Theorem~\ref{thm:pointwiseconvergence}
and can be obtained easily using the following lemma.

\begin{lemma}
\label{lem:equiintegrable}
Let $\Omega$ be second countable, $\mathcal{T}=(T(t))_{t\geq 0}$ a positive strong Feller semigroup,
and $\mu$ a strictly positive, finite, $\mathcal{T}$-invariant Borel measure on $\Omega$.
Assume that $\sup_{t\ge\tau} \|T(t)\| < \infty$ for some $\tau >0$, and
that for all $x\in\Omega$ and $t>0$ there exists $k_{t,x}\in L^1(\Omega,\mu)$ such that
\[ (T(t)f)(x) = \int_\Omega k_{t,x} f \dx\mu \quad (f\in B_b(\Omega)).\]
Then the following assertions hold.
\begin{enumerate}[(a)]
	\item \label{item:equi-integrable} The family $\{ k_{t,x} : t\geq 2\tau \}$ is equiintegrable for all $x\in\Omega$, i.e.,
	for all $\eps>0$ there exists $\delta>0$ such that $\vert \int_A k_{t,x} \dx\mu\vert < \eps$ for every $t\geq 2\tau$ and 
	each Borel set $A\subset \Omega$ with $\mu(A)<\delta$.
	\item \label{item:jointlyconvergence} Let $(g_n)\subset B_b(\Omega)$ be a bounded sequence with $\lim g_n(x) = 0$ for 
	almost every $x\in\Omega$. Then
	$\lim (T(t_n)g_n)(x) = 0$ for all $x\in\Omega$ and every sequence $(t_n) \subset [2\tau,\infty)$.
\end{enumerate}
\end{lemma}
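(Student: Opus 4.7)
The plan is to prove (a) first, reducing the equiintegrability of the entire family $\{k_{t,x} : t \ge 2\tau\}$ to the absolute continuity of the integral of the single $L^1$-function $k_{\tau,x}$ via the semigroup property, and then to deduce (b) from (a) by a classical Egorov argument. Throughout I fix $x \in \Omega$ and set $M \coloneqq \sup_{s \ge \tau} \Vert T(s)\Vert$. Since $T(t)$ is positive, $k_{t,x} \ge 0$ $\mu$-almost everywhere, so the absolute values in (a) can be dropped.

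For (a), the starting point is to use the semigroup identity together with the kernel representation of $T(\tau)$ to write, for $t \ge 2\tau$ and any Borel set $A \subset \Omega$,
\[ \int_A k_{t,x}\dx\mu = (T(t)\mathds{1}_A)(x) = \int_\Omega k_{\tau,x}(y)\, h_A(y)\dx\mu(y), \qquad h_A \coloneqq T(t-\tau)\mathds{1}_A.\]
Here $h_A$ admits two uniform controls: pointwise, $0 \le h_A \le M$ because $t - \tau \ge \tau$; and in $L^1$-norm, $\int_\Omega h_A \dx\mu = \mu(A)$ by $\mathcal{T}$-invariance. Given $\eps > 0$, absolute continuity of the integral of $k_{\tau,x}$ allows me to choose $c > 0$ with $\int_{\{k_{\tau,x} > c\}} k_{\tau,x} \dx\mu < \eps/(2M)$. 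Splitting the above integral along $\{k_{\tau,x} \le c\}$ and $\{k_{\tau,x} > c\}$ then yields
\[ \int_\Omega k_{\tau,x}\, h_A\dx\mu \le c\mu(A) + M\int_{\{k_{\tau,x}>c\}} k_{\tau,x}\dx\mu < c\mu(A) + \eps/2,\]
so that $\delta \coloneqq \eps/(2c)$ does the job uniformly in $t \ge 2\tau$.

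For (b), set $c \coloneqq \sup_n \Vert g_n\Vert_\infty$ and fix $\eps > 0$. Part (a) yields $\delta > 0$ such that $\int_A k_{t,x}\dx\mu < \eps/(2c)$ whenever $t \ge 2\tau$ and $\mu(A) < \delta$. Since $\mu$ is finite and $g_n \to 0$ pointwise almost everywhere, Egorov's theorem furnishes a Borel set $E \subset \Omega$ with $\mu(\Omega \setminus E) < \delta$ on which $g_n \to 0$ uniformly. The kernel representation then gives
\[ \vert (T(t_n)g_n)(x)\vert \le \sup_{E} \vert g_n\vert \int_\Omega k_{t_n,x}\dx\mu + c \int_{\Omega \setminus E} k_{t_n,x}\dx\mu \le M\sup_E \vert g_n\vert + \eps/2,\]
using $\int_\Omega k_{t_n,x}\dx\mu = (T(t_n)\mathds{1})(x) \le M$ and part (a) applied to $A = \Omega \setminus E$. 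The first summand tends to zero by uniform convergence on $E$, so $(T(t_n)g_n)(x) \to 0$.

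I anticipate no serious obstacle. The only conceptually non-trivial step is the semigroup-theoretic reduction in (a), which converts a family of kernels depending on $t$ into a single $L^1$-function whose absolute continuity is classical; everything else is standard Egorov and splitting manipulation.
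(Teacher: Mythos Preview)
Your proof is correct, and part (b) is essentially the same Egorov argument as in the paper. Part (a), however, is genuinely different. The paper argues by contradiction: if $(T(t_n)\mathds{1}_{A_n})(x)\ge\eps_0$ with $\mu(A_n)\to 0$, then by the equicontinuity assertion of Theorem~\ref{thm:ultrafeller} the bound $(T(t_n)\mathds{1}_{A_n})(y)\ge\eps_0/2$ persists on a fixed open neighbourhood $U$ of $x$; integrating against the invariant measure then gives $\mu(A_n)\ge\mu(U)\eps_0/2$, a contradiction since $\mu(U)>0$.

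Your route avoids the ultra-Feller machinery altogether. By writing $T(t)=T(\tau)T(t-\tau)$ you reduce the whole family $\{k_{t,x}:t\ge 2\tau\}$ to the single function $k_{\tau,x}\in L^1(\Omega,\mu)$ integrated against functions that are simultaneously bounded by $M$ and small in $L^1$, so the classical tail estimate for one $L^1$-function finishes the job. This is shorter and uses neither second countability, nor strict positivity of $\mu$, nor the strong Feller property; it would work for any positive bounded semigroup with invariant measure and pointwise $L^1$-kernels. The paper's argument, by contrast, makes essential use of the equicontinuity coming from Theorem~\ref{thm:ultrafeller}, which ties the lemma into the ultra-Feller characterisation developed earlier but is not actually needed for this particular statement.
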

\begin{proof}
	(\ref{item:equi-integrable}) First note that $k_{t,x} \geq 0$ almost everywhere since $\mathcal{T}$ is positive.
	Let $x\in\Omega$. Assume that there exists $\eps_0>0$ and sequences $(t_n) \subset [2\tau,\infty)$
	and $(A_n) \subset\Omega$ such that $\lim \mu(A_n) =0$ and 
	\[ (T(t_n)\mathds{1}_{A_n})(x) = \int_{A_n} k_{t_n,x} \dx\mu \geq \eps_0 \quad (n\in\N).\]
	Since
	\[ \{ T(t_n)\mathds{1}_{A_n} : n\in\N \} 
	\subset \{ T(\tau)f : f\in B_b(\Omega), \, \vert f\vert \leq \sup\nolimits_{t\geq \tau} \Vert T(t)\Vert  \} \]
	is equicontinuous at $x\in\Omega$ by Theorem~\ref{thm:ultrafeller},
	there exists an open neighborhood $U$ of $x$ such that $(T(t_n)\mathds{1}_{A_n})(y) \geq \frac{\eps_0}{2}$ for all $y\in U$.
	Consequently, 
	\[ \mu(A_n) = \int_\Omega \mathds{1}_{A_n} \dx\mu = \int_\Omega T(t_n)\mathds{1}_{A_n} \dx\mu \geq \int_U \frac{\eps_0}{2} \dx\mu 
	\geq \mu(U)\frac{\eps_0}{2}, \]
	contradicting our assumption.

	(\ref{item:jointlyconvergence}) Let $\eps>0$, $x\in\Omega$ and $(t_n)\subset [2\tau,\infty)$.
	By Egoroff's theorem~\cite[Thm~10.38]{aliprantis2006} and part (a) there exists a Borel set $A\subset \Omega$ such that
	$\int_A k_{t_n,x} \dx\mu < \eps$ for all $n\in\N$ and $\lim g_n = 0$ uniformly on $\Omega\setminus A$.
	It follows that
	\begin{align*}
		\vert (T(t_n)g_n)(x) \vert &=  \biggr\vert \int_A k_{t_n,x} g_n \dx\mu + \int_{\Omega\setminus A} k_{t_n,x} g_n \dx\mu \biggr\vert \\
&\leq \Vert g_n \Vert_\infty \int_A k_{t_n,x} \dx\mu + \eps \int_{\Omega\setminus A} k_{t_n,x} \dx\mu \\
&\leq \Vert g_n \Vert_\infty \eps + \eps \sup_{t\geq 2\tau}\Vert T(t)\Vert 
	\end{align*}
	for $n\in\N$ sufficiently large such that $\vert g_n(x) \vert < \eps$ for all $x\in\Omega\setminus A$.\qedhere
\end{proof}

\begin{theorem}\label{thm:adjointconv}
	Under the assumptions of Theorem~\ref{thm:pointwiseconvergence} 
	there exists $e\in \fix(\mathcal{T})$ with $e(x)>0$ $\mu$-almost everywhere, $\fix(\mathcal{T})=\lin\{e\}$, and
	$\Vert e\Vert_{L^1(\Omega,\mu)}=1$ such that
	\[ \lim_{t\to\infty} T'(t) \nu  = \applied{e}{\nu} \mu \]
	for every finite Borel measure $\nu$
	in the norm of $B_b(\Omega)'$, which is equivalent to the total variation norm (see e.g. \cite[Thm 14.4]{aliprantis2006}).
	In particular, up to scalar multiples
	there is only one positive, finite, $\mathcal{T}$-invariant Borel measure $\mu$.
\end{theorem}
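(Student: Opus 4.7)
The plan is to reduce the adjoint convergence to the special case $\nu = \delta_x$ for $x \in \Omega$, where it becomes a statement about convergence of the $L^1(\Omega,\mu)$-densities of $T'(t)\delta_x$, and then to prove this pointmass case by a weak*-compactness argument combined with Lemma~\ref{lem:equiintegrable}(b). Let $e \in \fix(\mathcal{T})$ be the fixed point supplied by Theorem~\ref{thm:pointwiseconvergence}, normalized so that $\|e\|_{L^1(\Omega,\mu)} = 1$. For any $f \in B_b(\Omega)$ with $\|f\|_\infty \leq 1$ and any finite Borel measure $\nu$, the identity
\[\langle f, T'(t)\nu - \langle e, \nu\rangle \mu\rangle = \int_\Omega \langle f, T'(t)\delta_x - e(x)\mu\rangle\dx\nu(x)\]
together with $|\langle f, T'(t)\delta_x - e(x)\mu\rangle| \leq \|T'(t)\delta_x - e(x)\mu\|_{TV}$ yields
\[\|T'(t)\nu - \langle e, \nu\rangle \mu\|_{TV} \leq \int_\Omega \|T'(t)\delta_x - e(x)\mu\|_{TV}\dx|\nu|(x).\]
The integrand is bounded uniformly in $x$ and $t$ by $\sup_{t\geq 0}\|T(t)\| + \mu(\Omega)\|e\|_\infty$, so dominated convergence reduces the assertion to showing $\|T'(t)\delta_x - e(x)\mu\|_{TV} \to 0$ for each $x \in \Omega$.

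The pointmass case is the main obstacle, since the pointwise convergence $T(t)f(x) \to e(x)\langle f,\mu\rangle$ from Theorem~\ref{thm:pointwiseconvergence} is known only for each fixed $f$, whereas the total variation norm requires control of $\sup_{\|f\|_\infty \leq 1}|T(t)f(x) - e(x)\langle f,\mu\rangle|$. I plan to close this uniformity gap by contradiction: suppose, for some $x \in \Omega$ and $\eps > 0$, there exist $t_n \to \infty$ and $(f_n) \subset B_b(\Omega)$ with $\|f_n\|_\infty \leq 1$ such that $|T(t_n)f_n(x) - \alpha_n e(x)| \geq \eps$, where $\alpha_n \coloneqq \int_\Omega f_n \dx\mu$. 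Since $L^1(\Omega,\mu)$ is separable, Banach-Alaoglu allows me to pass to a subsequence along which $f_n \to f$ in the weak* topology of $L^\infty(\Omega,\mu) \cong L^1(\Omega,\mu)'$, with $\|f\|_\infty \leq 1$ and $\alpha_n \to \alpha \coloneqq \int_\Omega f\dx\mu$. Now fix any $s > 0$ and decompose
\[T(t_n)f_n = T(t_n - s)\bigl[T(s)f_n - T(s)f\bigr] + T(t_n)f.\]
By the integral representation of Theorem~\ref{thm:ultrafeller}, $T(s)g(y) = \int_\Omega k_{s,y}\,g\dx\mu$ with $k_{s,y} \in L^1(\Omega,\mu)$, so the weak* convergence gives $T(s)f_n(y) \to T(s)f(y)$ for every $y \in \Omega$; in particular, the sequence $T(s)f_n - T(s)f$ is uniformly bounded in $B_b(\Omega)$ and tends pointwise to zero. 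For $n$ sufficiently large that $t_n - s \geq 2\tau$, Lemma~\ref{lem:equiintegrable}(b) forces $T(t_n - s)[T(s)f_n - T(s)f](x) \to 0$, while Theorem~\ref{thm:pointwiseconvergence} applied to the fixed function $f$ gives $T(t_n)f(x) \to \alpha e(x)$. Combining, $T(t_n)f_n(x) - \alpha_n e(x) \to \alpha e(x) - \alpha e(x) = 0$, a contradiction.

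The uniqueness of $\mu$ up to positive multiples is then immediate: if $\tilde\mu$ is any other positive, finite, $\mathcal{T}$-invariant Borel measure, then the main claim applied to the measure $\tilde\mu$ yields $\tilde\mu = T'(t)\tilde\mu \to \langle e, \tilde\mu\rangle \mu$ in total variation, whence $\tilde\mu = \langle e,\tilde\mu\rangle\mu$ is a scalar multiple of $\mu$.
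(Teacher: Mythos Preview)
Your argument is correct and runs parallel to the paper's: both pass to a subsequence along which the test functions (or their images under some $T(s)$) converge pointwise, apply Lemma~\ref{lem:equiintegrable}(\ref{item:jointlyconvergence}) to the resulting difference, and invoke Theorem~\ref{thm:pointwiseconvergence} on the fixed limit function. The paper treats a general $\nu$ directly and extracts pointwise convergence of $T(\tau)f_n$ via the equicontinuity of Theorem~\ref{thm:ultrafeller}(\ref{assertion4}) together with Ascoli--Arzel\`a; you instead reduce to $\nu=\delta_x$ first and obtain the convergence $T(s)f_n(y)\to T(s)f(y)$ from weak*-compactness in $L^\infty(\Omega,\mu)$ combined with the density representation $T(s)g(y)=\int k_{s,y}\,g\,\dx\mu$. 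Either compactness device feeds into Lemma~\ref{lem:equiintegrable} in the same way, so neither route is materially shorter.

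Two small points to tidy. First, the weak* limit $f$ is a priori only an element of $L^\infty(\Omega,\mu)$; choose a representative in $B_b(\Omega)$ with sup-norm at most~$1$ before writing $T(t_n)f$ and applying Theorem~\ref{thm:pointwiseconvergence}. Second, in your reduction step the Borel measurability of $x\mapsto\|T'(t)\delta_x-e(x)\mu\|_{TV}$ deserves a word: since $L^1(\Omega,\mu)$ is separable, take a countable weak*-dense set $\{g_m\}$ in the unit ball of $L^\infty(\Omega,\mu)$ and write the norm as $\sup_m\bigl|(T(t)g_m)(x)-e(x)\int_\Omega g_m\,\dx\mu\bigr|$, a countable supremum of continuous functions of~$x$.
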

\begin{proof}
	The invariant measure $\mu$ is strictly positive by Proposition~\ref{prop:TL1irred}.
	Moreover, Theorem~\ref{thm:pointwiseconvergence} guarantees the existence of
	$e\in \fix(\mathcal{T})$ with $e(x)>0$ almost everywhere and $\fix(\mathcal{T})=\lin\{e\}$
	such that
	\begin{align}
	\lim_{t\to\infty} \applied{ f}{T'(t)\delta_x} = \applied{e}{\delta_x} \applied{f}{\mu}\label{eqn:doobassertion}
	\end{align}
	for all $f\in B_b(\Omega)$ and all $x\in\Omega$.

	Let $\nu$ be a finite Borel measure on $\Omega$, without loss of generality positive, and let $\tau>0$.
	Fix sequences $(t_n)\subset[3\tau,\infty)$
	and $(f_n)\subset B_b(\Omega)$ such that $\lim t_n = \infty$ and $\Vert f_n \Vert_\infty \leq 1$.
	Since the bounded family $\{ T(\tau )f_n : n\in\N \}$ is equicontinuous by Theorem~\ref{thm:ultrafeller},
	the Ascoli-Arzel\`a theorem yields that
	there is $g\in C_b(\Omega)$ such that $\lim (T(\tau)f_n)(x) = g(x)$ for all $x\in\Omega$ after passing to a subsequence,
	see e.g. \cite[Thm IV.19.1c, p. 219]{benedetto2002}.
	Hence, $\lim T(t_n-\tau)(T(\tau)f_n-g)=0$ pointwise by Lemma~\ref{lem:equiintegrable}
	and $\lim T(t_n -\tau )g = \applied{g}{\mu} e$ pointwise by \eqref{eqn:doobassertion}.

	The dominated convergence theorem now yields that
	\begin{align*}
		& \vert \applied{f_n}{T'(t_n)\nu} - \applied{f_n}{\mu}\applied{e}{\nu} \vert  \\
		 & \quad \leq  \int_\Omega \biggr\vert T(t_n)f_n - \int_\Omega f_n \dx\mu \, e \biggr\vert \dx\nu \\
		& \quad \leq \int_\Omega \biggr\vert T(t_n-\tau)(T(\tau)f_n - g)\biggr\vert \dx\nu 
		+ \int_\Omega \biggr\vert T(t_n-\tau)g - \int_\Omega g \dx\mu \, e \biggr\vert \dx\nu  \\
		& \qquad + \int_\Omega \biggr\vert \int_\Omega g\dx\mu - \int_\Omega T(\tau)f_n \dx\mu \biggr\vert \, e \dx\nu \, \to 0 \quad (n\to\infty).
	\end{align*}
	This implies that 
	\[ \lim_{t\to\infty} \applied{f}{T'(t)\nu} = \lim_{t\to\infty} \int_\Omega T(t)f\dx\nu 
	= \int_\Omega f \dx\mu \int e \dx\nu = \applied{f}{\mu} \applied{e}{\nu} \]
	uniformly in $f\in \{ h\in B_b(\Omega) : \Vert h\Vert_\infty \leq 1\}$.
\end{proof}

\section{Application}
\label{sec:examples}
In this section we apply our version of Doob's theorem to semigroups generated by elliptic operators with Neumann 
boundary conditions on a domain $\Omega\subset\mathds{R}^N$ of finite measure.
We do not impose any boundary regularity condition on $\Omega$, but
for simplicity we assume $\Omega$ to be connected.

\smallskip
Let $\Omega\subset\mathds{R}^N$ be open and connected with $\lambda(\Omega) < \infty$, where $\lambda$ denotes
the Lebesgue measure. Let $a_{ij}\in L^\infty(\Omega)$ be real-valued functions. We assume that the sesquilinear
form $a$ given by
\[ a(u,v) \coloneqq \sum_{i,j=1}^N \int_\Omega a_{ij} \mathrm{D}_i u \, \overline{\mathrm{D}_j v} \; \dx \lambda \quad (u,v\in H^1(\Omega)) \]
is sectorial, i.e., that there exists $\theta \in [0,\frac{\pi}{2})$ such that
\[
	\sum_{i,j=1}^N a_{ij} \xi_i \overline{\xi_j} \in \Sigma_\theta \coloneqq \bigl\{ r e^{i\tau} : r \ge 0, \; |\tau| \le \theta \bigr\}
\]
almost everywhere on $\Omega$ for all $\xi \in \mathds{C}^N$.
We also assume that $a$ is locally strongly elliptic, i.e., that 
for every compact set $K \subset \Omega$ there exists $\alpha_K > 0$ such that
\[
	\Real \sum_{i,j=1}^N a_{ij} \xi_i \overline{\xi_j} \geq \alpha_K \vert\xi\vert^2
\]
almost everywhere on $K$ for all $\xi \in \mathds{C}^N$.
We set $\|u\|_a^2 \coloneqq \Real a(u,u) + \|u\|_{L^2}^2$ for $u \in D(a) = H^1(\Omega)$.

The form $a$ is associated with an m-sectorial operator $-A$ on $L^2(\Omega)$ by \cite[Lem 4.1 and Thm 3.2]{AreEls08},
and more precisely $A$ generates a real, positive, analytic $C_0$-semigroup $\mathcal{T}=(T(t))_{t\geq 0}$
on $L^2(\Omega)$ with $\mathds{1}$ being in the fixed space of $\mathcal{T}$ as well as the fixed space of
the adjoint semigroup $\mathcal{T}^\ast$.
Moreover, $\mathcal{T}$ extends to a contraction semigroup on $L^p(\Omega)$ for every $p \in [1,\infty]$, which is strongly
continuous for $p \in [1,\infty)$, see \cite[Prop 4.8 and Cor 4.9]{AreEls08}.

Given a measurable set $\omega \subset \Omega$ with $0 < \lambda(\omega) < \lambda(\Omega)$,
there exists a function $u \in H^1(\Omega)$ such that $u \mathds{1}_\omega \not\in H^1_{\mathrm{loc}}(\Omega)$.
Since every Cauchy sequence of $(H^1(\Omega),\|\cdot\|_a)$ converges in $H^1_{\mathrm{loc}}(\Omega)$, the function
$u \mathds{1}_\omega$ is not the limit in $L^2(\Omega)$ of a Cauchy sequence in
$(H^1(\Omega),\|\cdot\|_a)$. Hence we obtain from \cite[Prop 3.11]{AreEls08} that
$\mathcal{T}$ is irreducible on $L^2(\Omega)$, compare \cite[Thm 2.7]{ouhabaz2005},
and thus also on $L^1(\Omega)$,

By the De Giorgi-Nash theorem and a standard bootstrapping argument, $D(A^n)$ consists of locally H\"older continuous functions
if $n\in\N$ is sufficiently large. Since $\mathcal{T}$ is analytic, this implies that $T(t)f \in D(A^\infty) \subset C(\Omega)$ for 
all $f\in L^2(\Omega)$ and $t>0$. In particular, $T(t)f \in C_b(\Omega)$ whenever $f\in L^\infty(\Omega)$.
Thus we can identify $\mathcal{T}$ with a positive, bounded, strong Feller semigroup on
$B_b(\Omega)$ with invariant measure $\lambda$.
Finally, since $T(t)L^2(\Omega) \subset L^\infty_{\mathrm{loc}}(\Omega)$ for all $t>0$, we deduce from
\cite[Prop 1.7]{arendt1994} that $\mathcal{T}$ consists of kernel operators and thus
satisfies the equivalent conditions of Theorem~\ref{thm:ultrafeller}.

From the above we conclude on the basis of Theorem~\ref{thm:pointwiseconvergence} that
\[ \lim_{t\to\infty} (T(t)f)(x) = \lambda(\Omega)^{-1} \int_\Omega f \dx\lambda\]
for all $f\in B_b(\Omega)$ and all $x\in \Omega$.

\smallskip

There is also a more direct way to 
obtain pointwise convergence in the above situation. By Theorem~\ref{thm:greiner}, the semigroup
$(T(t))_{t\geq 0}$ on $L^1(\Omega)$ converges strongly to its mean ergodic projection $P$ and it follows from the 
closed graph theorem that $T(1)$ is a continuous mapping from $L^1(\Omega)$ to $C(\Omega)$ endowed with the compact--open topology.
This implies that
\[ \lim_{t\to\infty} T(t)f = \lim_{t\to\infty} T(1)T(t-1)f = T(1)Pf = Pf \quad (f\in L^1(\Omega)) \]
in $C(\Omega)$, i.e., uniformly on compact subsets of $\Omega$.

\section*{Acknowledgment}
The first author was supported by the graduate school 
\emph{Mathematical Analysis of Evolution, Information and Complexity} during the work on this article.

\bibliographystyle{amsalpha}

\bibliography{newdoob}

\newcommand{\etalchar}[1]{$^{#1}$}
\providecommand{\bysame}{\leavevmode\hbox to3em{\hrulefill}\thinspace}
\providecommand{\MR}{\relax\ifhmode\unskip\space\fi MR }
\providecommand{\MRhref}[2]{%
  \href{http://www.ams.org/mathscinet-getitem?mr=#1}{#2}
}
\providecommand{\href}[2]{#2}
\begin{thebibliography}{AGG{\etalchar{+}}86}

\bibitem[AB94]{arendt1994}
W.~Arendt and A.~V. Bukhvalov, \emph{Integral representations of resolvents and
  semigroups}, Forum Math. \textbf{6} (1994), no.~1, 111--135.

\bibitem[AB06]{aliprantis2006}
C.D. Aliprantis and K.C. Border, \emph{{Infinite dimensional analysis: a
  hitchhiker's guide}}, Springer Verlag, 2006.

\bibitem[AGG{\etalchar{+}}86]{nagel1986}
W.~Arendt, A.~Grabosch, G.~Greiner, U.~Groh, H.~P. Lotz, U.~Moustakas,
  R.~Nagel, F.~Neubrander, and U.~Schlotterbeck, \emph{One-parameter semigroups
  of positive operators}, Lecture Notes in Mathematics, vol. 1184,
  Springer-Verlag, Berlin, 1986.

\bibitem[Are08]{arendt2008}
W.~Arendt, \emph{Positive semigroups of kernel operators}, Positivity
  \textbf{12} (2008), no.~1, 25--44.

\bibitem[AtE08]{AreEls08}
W.~Arendt and A.F.M. ter Elst, \emph{Sectorial forms and degenerate
  differential operators}, arXiv:0812.3944v2, 2008.

\bibitem[Bog07]{bogachev2007}
V.~I. Bogachev, \emph{Measure theory. {V}ol. {I}, {II}}, Springer-Verlag,
  Berlin, 2007. \MR{2267655 (2008g:28002)}

\bibitem[Buk78]{bukhvalov1978}
A.~V. Bukhvalov, \emph{{Integral representation of linear operators}}, Journal
  of Mathematical Sciences \textbf{9} (1978), no.~2, 129--137.

\bibitem[DiB02]{benedetto2002}
Emmanuele DiBenedetto, \emph{Real analysis}, Birkh\"auser Advanced Texts:
  Basler Lehrb\"ucher. [Birkh\"auser Advanced Texts: Basel Textbooks],
  Birkh\"auser Boston Inc., Boston, MA, 2002.

\bibitem[Doo48]{doob48}
J.~L. Doob, \emph{Asymptotic properties of {M}arkoff transition prababilities},
  Trans. Amer. Math. Soc. \textbf{63} (1948), 393--421. \MR{0025097 (9,598c)}

\bibitem[DPL95]{daprato1995}
G.~Da~Prato and A.~Lunardi, \emph{On the {O}rnstein-{U}hlenbeck operator in
  spaces of continuous functions}, J. Funct. Anal. \textbf{131} (1995), no.~1,
  94--114. \MR{1343161 (97m:47056)}

\bibitem[DPZ96]{daprato1996}
G.~Da~Prato and J.~Zabczyk, \emph{Ergodicity for infinite-dimensional systems},
  London Mathematical Society Lecture Note Series, vol. 229, Cambridge
  University Press, Cambridge, 1996. \MR{1417491 (97k:60165)}

\bibitem[DS58]{dunford1958}
N.~Dunford and J.~T. Schwartz, \emph{Linear {O}perators. {I}. {G}eneral
  {T}heory}, Interscience Publishers, Inc., New York, 1958.

\bibitem[EN00]{nagel2000}
K.~Engel and R.~Nagel, \emph{One-parameter semigroups for linear evolution
  equations}, Graduate Texts in Mathematics, vol. 194, Springer-Verlag, New
  York, 2000.

\bibitem[Fre04]{fremlin2004}
D.~H. Fremlin, \emph{Measure theory. {V}ol. 3}, Torres Fremlin, Colchester,
  2004, Measure algebras, Corrected second printing of the 2002 original.

\bibitem[Gre82]{greiner1982}
G.~Greiner, \emph{Spektrum und {A}symptotik stark stetiger {H}albgruppen
  positiver {O}peratoren}, Sitzungsber. Heidelb. Akad. Wiss. Math.-Natur. Kl.
  (1982), no.~3, 55--80.

\bibitem[Hai09]{hairer09}
M.~Hairer, \emph{Ergodic properties of a class of non-{M}arkovian processes},
  Trends in stochastic analysis, London Math. Soc. Lecture Note Ser., vol. 353,
  Cambridge Univ. Press, Cambridge, 2009, pp.~65--98. \MR{2562151
  (2011b:60136)}

\bibitem[MN91]{meyer1991}
P.~Meyer-Nieberg, \emph{Banach lattices}, Universitext, Springer-Verlag,
  Berlin, 1991.

\bibitem[MPW02]{metafune2002}
G.~Metafune, D.~Pallara, and M.~Wacker, \emph{Feller semigroups on {$\bold
  R^N$}}, Semigroup Forum \textbf{65} (2002), no.~2, 159--205. \MR{1911723
  (2003i:35170)}

\bibitem[Ouh05]{ouhabaz2005}
E.-M. Ouhabaz, \emph{Analysis of heat equations on domains}, London
  Mathematical Society Monographs Series, vol.~31, Princeton University Press,
  Princeton, NJ, 2005. \MR{2124040 (2005m:35001)}

\bibitem[PZ95]{peszat1995}
S.~Peszat and J.~Zabczyk, \emph{Strong {F}eller property and irreducibility for
  diffusions on {H}ilbert spaces}, Ann. Probab. \textbf{23} (1995), no.~1,
  157--172. \MR{1330765 (96f:60105)}

\bibitem[Sch74]{schaefer1974}
H.~H. Schaefer, \emph{Banach lattices and positive operators}, Springer-Verlag,
  New York, 1974, Die Grundlehren der mathematischen Wissenschaften, Band 215.

\bibitem[Sei97]{seidler1997}
J.~Seidler, \emph{Ergodic behaviour of stochastic parabolic equations},
  Czechoslovak Math. J. \textbf{47(122)} (1997), no.~2, 277--316. \MR{1452421
  (98e:60099)}

\bibitem[Ste94]{stettner1994}
L.~Stettner, \emph{Remarks on ergodic conditions for {M}arkov processes on
  {P}olish spaces}, Bull. Polish Acad. Sci. Math. \textbf{42} (1994), no.~2,
  103--114. \MR{1810695}

\end{thebibliography}

\end{document}